\newtheorem{theorem}{Theorem}[section]
\newtheorem{lemma}[theorem]{Lemma}
\newtheorem{corollary}[theorem]{Corollary}
\theoremstyle{definition}
\newcommand{\op}[1]{\textrm{\upshape #1}}
\newcommand{\join}{\vee}
\newcommand{\meet}{\wedge}
\newcommand{\la}{\langle}
\newcommand{\ra}{\rangle}
\newcommand{\alg}[1]{{\textbf{\upshape #1}}}  %
\newcommand{\vv}[1]{\mathsf {#1}}
\newcommand{\f}{\varphi}
\renewcommand{\th}{\theta}
\newcommand{\D}{\Delta}
\newcommand{\sse}{\subseteq}
\newcommand{\HH}{{\mathbf H}}  
\newcommand{\SU}{{\mathbf S}} 
\newcommand{\PP}{{\mathbf P}}   
\newcommand{\VV}{{\mathbf V}}   
\newcommand{\ib}{\item[$\bullet$]}
\newcommand{\Con}[1]{\operatorname{Con}(\alg #1)}
\newcommand{\vuc}[2]{#1_1,\dots,#1_{#2}}
\newcommand{\imp}{\rightarrow}
\newcommand{\nneg}{\mathop{\sim}}
\def\square{\RIfM@\bgroup\else$\bgroup\aftergroup$\fi
  \vcenter{\hrule\hbox{\vrule\@height.6em\kern.6em\vrule}\hrule}\egroup}
\newcommand{\Par}{\mathrel{\bindnasrepma}}
\newcommand{\smlcirc}{\raise3pt\hbox{\textrm{\circle{3.3}}}}
\newcommand{\smallcirc}{\mathrel{\smlcirc}}
\newcommand{\limp}{\relbar\joinrel\hspace{2.6pt}\smallcirc}
\newcommand{\one}{\textsf{\bf 1}}
\newcommand{\zero}{\textsf{\bf 0}}
\newcommand{\escl}{\text{\large$\mathop{!}$\normalsize}}
\newcommand{\?}{\text{\large$\mathop{?}$\normalsize}}
\newcommand{\myfrac}[2]{\dfrac{#1}{\lower.5ex\hbox{$#2$}}}
\mathchardef\hu="0362
\begin{document}

\title{An algebraic investigation of Linear Logic}
\author{Paolo Aglian\`{o}\\
DIISM\\
Universit\`a di Siena\\
Italy\\
agliano@live.com
}
\date{}
\maketitle

\begin{abstract} In this paper we investigate two logics from an algebraic
point of view. The two logics are: $\mathsf {MALL}$ (multiplicative-additive
Linear Logic) and $\mathsf {LL}$ (classical Linear Logic). Both logics turn out to
be strongly algebraizable in the sense of Blok and Pigozzi and their
equivalent algebraic semantics are, respectively, the variety of
Girard algebras and the variety of girales. We show that any variety of girales
has equationally definable principale congruences and we classify all
varieties of Girard algebras having this property. Also we investigate the
structure of the algebras in question, thus obtaining a representation theorem
for Girard algebras and girales. We also prove that congruence lattices of girales
are really congruence lattices of Heyting algebras and we construct examples
in order to show that the variety of girales contains infinitely many
nonisomorphic finite simple algebras.
\end{abstract}

This note is a reworking of a manuscript (which dates back to the late 1990's) that  had a very limited circulation at the time and was never published.
Lately it has been suggested that I made it  available on arxiv so I decided to review it, correcting the usual mistakes and modernizing it a little bit.
I wish to thank Carles Noguera and Wesley Fussner who encouraged me to complete this little project.

\section{Introduction}

When one wishes to investigate a nonclassical logic one has a
choice between two approaches: the syntactical  and the
algebraic.
The first usually gives rise to a {\em relational (Kripke-style) semantics},
while the other deals with {\em algebraic semantics}.
The great success of Kripke in the sixties with his  relational semantics for
modal and intuitionistic logic was a source of inspiration for many
researches based on his methods, while the algebraic approach
receded into the background. The algebraic approach became fashionable again
starting in the late seventies, mainly because of the work of W. Blok and
D. Pigozzi. W. Blok, in
his Ph.D. thesis \cite{Blok1976},  conducted an
in-depth study of Lewis' modal logic S4, and in \cite{Blok1980} he
investigated the entire {\em lattice of modal logics} by purely algebraic
means. Later he and D. Pigozzi investigated thoroughly the matter of
algebraizability of logics \cite{BlokPigozzi1989,BlokPigozzi1992}. This
investigation
set the foundation for a new field, now commonly called {\em abstract
algebraic logic}.

One of the first results of their line of investigation was the
identification  of the ``right'' concept of {\em algebraizable logic}.
Roughly speaking, a logic $\sf L$ is algebraizable if there is a class $\vv
K$
 of algebras (no
infinitary operations, no relations, no second
order axioms) which is to $\sf L$ what the variety of Boolean algebras is to
classical propositional calculus.  The class $\vv K$ is called the
{\em equivalent algebraic semantics}  of {\sf L}. The
knowledge that a given class $\vv K$ of algebras
is the equivalent algebraic semantics of a known logical system
yields a good deal of information on its algebraic structure.
Conversely, one can discover algebraic properties of members of $\vv K$
that can be transformed into logical data.

In this note we will apply this machinery to two logics:
{\em multiplicative-additive linear logic} ({\sf MALL}) and
{\em classical linear logic} ({\sf LL}).
 Both {\sf MALL} and {\sf LL} will turn out to be
algebraizable. As is  usually the case it is no surprise what
their equivalent classes are: they consist of residuated lattices (possibly with a modal operator) obeying equations reflecting the logical
axioms.

\section{Linear Logic}

Linear Logic
 is a ``resource-conscious'' logic
introduced by J.-Y. Girard in the late 80's \cite{Girard1987}. Since then
Linear Logic has been developed by Girard himself, by his school and by many,
many others  the  full list of whom would be too long to include here.

Linear Logic is resource conscious in that the left side of a sequent
represents a resource that cannot be used freely. In a Gentzen-style
axiomatization this consciousness shows itself by the absence of the
classical weakening and contraction rules. For instance $A,A \vdash B$
means that we use two resources of type $A$ to get a datum of type $B$.
Moreover---and this is the main difference from other substructural
logics---Girard introduced two operators (the {\em exponentials}) that serve
to allow
weakening and contraction in a controlled way on individual formulas.

The propositional language of Linear Logic consists of four families
of connectives\footnote{our notation is slightly different from the original formulation, in that $\zero$ and $\bot$ are exchanged; the reason is that the original formulation conflicts with the common usage in residuated lattices.}:
\begin{enumerate}
\item[$\bullet$] the {\em multiplicative} connectives: $\cdot$ ,
$\Par$ (the par, i.e. the parallel ``or''), $\imp$ (the linear implication,
Girard's $\limp$), $\zero$  and $\one$;
\item[$\bullet$] the {\em additive} connectives: $\join$, $\meet$, $\top$ and
$\bot$;
\item[$\bullet$] the {\em linear negation} $\neg$,
which is
a de Morgan
involution with respect to
$\join$ and $\meet$;
\item[$\bullet$] the {\em exponentials}: $\escl$ and $\?$.
\end{enumerate}
A suggestive way of thinking about how these connectives work is to view
formulas as data types. For instance $A \meet B$ is a datum from which we can
extract, once, either a datum of type $A$ and a datum of type $B$; $A\cdot
B$
is just a pair of data; $A\imp B$ is a method of transforming a single
datum of type $A$ into a datum of type $B$; $\escl A$  indicates that
we can extract as many data of type $A$ as we like (weakening and contraction
on the left side of a sequent); and so on.

The original formulation of Linear Logic is in a Gentzen-style
axiomatization. However, in order to take advantage of Blok-Pigozzi's theory
of algebraizability, it is helpful to look at Linear Logic as a 1-deductive
system in the sense of \cite{BlokPigozzi1992}.
When we turn a logical system into a deductive system, we use a
procedure that might not be in accordance with the original motivation for
introducing the logical system. This is patent in the case of substructural
logics, in that any deductive system satisfies all the structural rules
by definition.
In general, if a logical system is presented as the set of theorems
of a Hilbert-style formal system, then it  defines a 1-deductive system
in a standard way.
Here is the Hilbert-style axiomatization of the sistem $\mathsf{LL}$  as presented by A.
Avron \cite{Avron1988},
consisting  of twenty-four axioms and three inference rules.

\bigskip
\small
\begin{tabular}{ll}
$\text{(HL1)}\quad p \imp p$
&$\text{(HL2)}\quad (p \imp q)\imp((q \imp r)\imp (p \imp r))$\\[.1in]
$\text{(HL3)}\quad (p\imp(q\imp r)) \imp (q\imp(p\imp r))$
&$\text{(HL4)}\quad \neg\neg p \imp p$\\[.1in]
$\text{(HL5)}\quad (p \imp\neg q) \imp (q \imp \neg p)$
&$\text{(HL6)}\quad p \imp (q \imp p \cdot q)$\\[.1in]
$\text{(HL7)}\quad p\imp(q\imp r) \imp(p\cdot q \imp r)$
&$\text{(HL8)} \quad \one$\\[.1in]
$\text{(HL9)}\quad \one\imp(p\imp p)$
&$\text{(HL10)}\quad p\imp (\neg p\imp \zero)$\\[.1in]
$\text{(HL11)}\quad \neg \zero$
&$\text{(HL12)}\quad  p \meet q  \imp p$\\[.1in]
$\text{(HL13)}\quad  p \meet q \imp q$
&$\text{(HL14)}\quad (p\imp q) \meet (p\imp r) \imp (p \imp q \meet
r)$\\[.1in]
$\text{(HL15)}\quad  p \imp p \join q$
&$\text{(HL16)}\quad q \imp p \join q$\\[.1in]
$\text{(HL17)}\quad  (p\imp r) \meet (q\imp r) \imp (p\join q \imp r)$
&$\text{(HL18)}\quad p \imp \top$\\[.1in]
$\text{(HL19)}\quad \bot \imp p $
&$\text{(HL20)}\quad  q \imp (\escl p \imp q)$\\[.1in]
$\text{(HL21)}\quad  (\escl p \imp (\escl p\imp q))\imp (\escl p \imp q)$
&$\text{(HL22)}\quad  \escl(p\imp q) \imp (\escl p \imp \escl q)$\\[.1in]
\end{tabular}

\begin{tabular}{ll}
$\text{(HL23)}\quad \escl p \imp p$
&$\text{(HL24)}\quad  \escl p \imp \escl\escl p$ \\[.1in]
$\text{(MP)} \qquad \dfrac{p\quad p\imp q}{q}$
&$\text{(Adj)}\qquad \dfrac{p\quad q}{p \meet q}$\\[.1in]
$\text{(Nec)} \qquad \dfrac{p}{\escl p}$&
\end{tabular}

\bigskip
\normalsize

For the logical reasons why we need to introduce the (Adj) rule, we refer the reader to \cite{Avron1988}, p.171.
We call $\mathsf{MALL}$ the {\em multiplicative-additive linear logic}, i.e. the exponential-free fragment; it is of course
axiomatized by (HL1)-(HL19) plus (MP) and (Adj).
Moreover:
\begin{enumerate}
\item [-] In {\sf MALL} one can define $p \Par q$ as $\neg p \imp q$.
\item [-] In {\sf LL} one  can define $\? p$ as $\neg \escl(\neg p)$.
\item [-] Finally $\vdash_{\sf MALL} p\imp q$ if and only if $\vdash_{\mathsf{MALL}} \Longleftrightarrow
\neg(p\cdot\neg
q)$.
\end{enumerate}

\section{Consequence relations}

In his original paper \cite{Girard1987},
J.-Y. Girard gave absolutely no meaning to the concept
of ``linear logical theory'' or to
any kind of associated consequence relation.
The question was tackled  again  by A. Avron \cite{Avron1988}. He
observed that the classical
methods for associating a consequence relation to the Gentzen-type
presentation of Linear Logic gives rise to two meaningful consequence
relations.

\begin{enumerate}
\item [$\bullet$] (The {\em internal} consequence relation)
$$
\vuc {\f}n \vdash_{\sf LL}^I \psi
$$
iff the corresponding sequent is derived in the Gentzen-type formalism
iff $\f_1 \imp(\f_2 \imp(\dots(\f_n \imp \psi)\dots))$ is a theorem of
Linear Logic.
\item[$\bullet$] (The {\em external} consequence relation)
$$
\vuc {\f}n \vdash_{\sf LL}^E \psi
$$
iff the sequent $\Rightarrow \psi$ is derivable in the Gentzen-type
formalism
obtained from the linear one by adding $\Rightarrow \f_1,\dots\Rightarrow
\f_n$ as axioms.
\end{enumerate}
These two concepts are well known to coincide in classical and
intuitionistic logic but not for Linear Logic. However,
in Theorem 2.7 of \cite{Avron1988} A. Avron showed that\footnote{He really
proved it for the logic {\sf MALL} but the proof carries through once one
adds the exponentials.}
$\Sigma \vdash_{\sf LL} \f$ if and only if $\Sigma \vdash_{\sf LL}^E \f$, which
seems to imply that our view of Linear Logic as a deductive system is not
totally disconnected from its motivations.

\section{The algebrization}

While the exponentials $\escl$ and $\?$ were central in Girard's original
idea, the {\em exponential-free fragment} of Linear Logic has also attracted
a lot of interest.  Let us stress that {\sf MALL} is  a
honest-to-God substructural logic close to  the well-studied
system {\sf R} of {\em Relevance Logic} \cite{AndersonBelnap}.
This connection can be roughly expressed by the equation
$$
\text{ {\sf R} $-$ contraction =  {\sf MALL} $+$ distribution}.
$$
Techniques from {\sf R} have been applied to {\sf MALL} with some success
(see
for instance \cite{AllweinDunn1993}).
Moreover {\sf MALL} is superior to {\sf R} at least in that it has
a {\em cut-elimination} theorem. However, everything comes with a price
tag: the lack of distribution in {\sf MALL} makes things harder from
an algebraic point of view.

The relationship between {\sf MALL} and {\sf LL} is complicated.
The axioms (HL22)--(HL24) and (Nec) resemble  the introduction
of an S4 modality.
However (HL20) and (HL21) (weakening and contraction limited to exclamated
formulas) seem to be responsible for a quantum leap: {\sf MALL} is
decidable (really, {\em PSPACE-complete})
while {\sf LL} is not \cite{Lincoln1990}.

\begin{theorem}\label{MALL} Any fragment of  {\sf LL} contaning $\{\meet, \imp, \one\}$ is algebraizable with defining
equation
$p \meet \one = \one$ and congruence formulas $\D=\{p\imp q, q \imp p\}$.
\end{theorem}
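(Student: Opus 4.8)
The plan is to verify the Blok--Pigozzi intrinsic characterization of algebraizability for the deductive system $\mathsf S$ presented by the chosen fragment, using $\D(p,q)=\{p\imp q,\,q\imp p\}$ as equivalence formulas and $\tau(p)=\{p\meet\one\ap\one\}$ as defining equations. Concretely, two groups of conditions must be checked: (i) that $\D$ is a system of congruence formulas, namely $\vdash_{\mathsf S}\D(p,p)$, $\D(p,q)\vdash_{\mathsf S}\D(q,p)$, $\D(p,q),\D(q,r)\vdash_{\mathsf S}\D(p,r)$, and the replacement rule $\bigcup_i\D(p_i,q_i)\vdash_{\mathsf S}\D(\lambda(\bar p),\lambda(\bar q))$ for every connective $\lambda$ of the fragment; and (ii) the bridge condition $p\dashv\vdash_{\mathsf S}\D(\tau(p))$, which here reads $p\dashv\vdash_{\mathsf S}\{(p\meet\one)\imp\one,\ \one\imp(p\meet\one)\}$. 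The reason the hypothesis isolates $\{\meet,\imp,\one\}$ is that $\D$ and $\tau$ are written in these three connectives, so every condition except replacement for the ``extra'' connectives can be discharged by derivations that never leave the fragment.

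First I would dispose of the easy congruence-formula conditions. Symmetry is immediate, since $\D(q,p)$ is literally $\D(p,q)$ as a set, and reflexivity is (HL1). Transitivity reduces to the derivable rule $p\imp q,\,q\imp r\vdash p\imp r$, which follows from (HL2) by two applications of (MP), applied to each of the two implications making up $\D$. The substantial part is the replacement property, which I would establish connective by connective. For $\imp$ I would first extract from (HL2) and (HL3) the two one-sided monotonicity rules $q\imp r\vdash(p\imp q)\imp(p\imp r)$ and $p\imp q\vdash(q\imp r)\imp(p\imp r)$, and then chain them through transitivity. For $\meet$ I would use the projections (HL12), (HL13) and the distribution axiom (HL14); the crucial move is that, in order to feed the two premises $(p_1\meet p_2)\imp q_1$ and $(p_1\meet p_2)\imp q_2$ into (HL14), one must first conjoin them, and this is exactly where the rule (Adj) is indispensable. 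The same template handles $\join$ via (HL15)--(HL17) together with (Adj), while the remaining connectives are governed locally---$\cdot$ by (HL6), (HL7), $\neg$ by (HL4), (HL5), and $\escl$ by (HL22)--(HL24) and (Nec)---so whichever connectives occur in the fragment each come equipped with their own replacement derivation.

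Next I would verify the bridge condition. The backward inclusion is quick: from $\one\imp(p\meet\one)$ and the theorem $\one$ of (HL8), (MP) yields $p\meet\one$, and then (HL12) with (MP) yields $p$. For the forward inclusion the key auxiliary fact is $p\dashv\vdash\one\imp p$: the instance $(\one\imp(p\imp p))\imp(p\imp(\one\imp p))$ of (HL3) together with (HL9) gives the theorem $p\imp(\one\imp p)$, whence $p\vdash\one\imp p$ by (MP), and the converse is again (HL8) and (MP). Granting $p$, I then have $\one\imp p$ and the instance $\one\imp\one$ of (HL1); by (Adj) and the instance of (HL14) these combine to $\one\imp(p\meet\one)$, while $(p\meet\one)\imp\one$ is outright a theorem by (HL13). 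This gives $p\dashv\vdash\D(p\meet\one,\one)$ and closes condition (ii).

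The hard part will be the replacement property, and more precisely the congruence of the additive connectives: it is here, and essentially only here, that (Adj) is used, so the theorem simultaneously explains why (Adj) had to be adopted as a primitive rule---without it $\meet$ (and $\join$) would fail to be compatible with $\D$ and the system would not be algebraizable by these data. Once replacement is secured for the three mandatory connectives the argument is genuinely uniform over fragments, since any further connective present is handled by its own axioms together with the $\imp$-reasoning already in place; this is what lets a single pair $(\D,\tau)$ algebraize every fragment containing $\{\meet,\imp,\one\}$ at once.
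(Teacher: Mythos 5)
Your proposal is correct and takes essentially the same approach as the paper: you verify the same Blok--Pigozzi intrinsic conditions with the same $\D=\{p\imp q,\,q\imp p\}$ and defining equation $p\meet\one\ap\one$, and your two bridge derivations reproduce the paper's own two derivations line by line (you even make explicit the (Adj) step needed before applying (HL14), which the paper leaves tacit). You additionally spell out the connective-by-connective replacement property that the paper dismisses as ``easily checked''; the only quibble is your closing remark that (Adj) is used ``essentially only'' in replacement, since your own (and the paper's) derivation of $\one\imp(p\meet\one)$ from $p$ also requires it.
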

\begin{proof}
The derivations
\begin{align*}
&\vdash p \mathrel{\D} p\\
&p\mathrel{\D} q  \vdash q\mathrel{\D}p\\
&p\mathrel{\D}q, q \mathrel{\D}r\vdash p\mathrel{\D} r
\end{align*}
follow readily from axioms (HL1)--(HL3) and
preservation by connectives is  easily checked.

It remains to show that $p\meet \one\mathrel{\D} \one \vdash p$ and that $p\vdash p\meet \one \mathrel{\D} \one$. Thus
for the first
\small
$$
\begin{array}{ll}
\one \imp p \meet \one&\text{(Hyp)}\\
\one &\text{(HL8)}\\
p\meet \one&\text{(MP)}\\
p &\text{(HL12)}
\end{array}
$$
\normalsize
and for the second
\small
\setlength{\arraycolsep}{.2 in}
$$
\begin{array}{ll}
\one \imp (p \imp p) &\text{(HL9)}\\
p\imp(\one \imp p) &\text{(HL3) + (MP)}\\
p &\text{(Hyp)}\\
\one \imp p &\text{(MP)}\\
\one \imp \one &\text{(HL1)}\\
(\one \imp p) \meet (\one \imp \one) \imp (\one \imp p\meet
\one)&\text{(HL14)}\\
\one \imp (p\meet \one) &\text{(MP)}\\
p\meet \one \imp \one &\text{(HL13)}\\
p\meet \one\mathrel{\D} \one
\end{array}
$$
\end{proof}

Let $\mathsf{T}$ be any fragment of $\mathsf{MALL}$ containing $\{\imp,\meet, \one\}$. The type of its EAS $\vv K_{\mathsf {T}}$ is determined by the connectives in $\sf T$ and we will follow the common usage of
denoting them by the same symbols. Moreover
$$
\Gamma \vdash_{\sf T} p \qquad \text{iff}
\qquad \{q \meet \one = \one : q \in \Gamma\} \vDash_{\mathsf{ K_T}} p \meet\one =
\one.
$$

A (pointed) {\bf Girard semilattice} is an algebra $\la A, \imp ,\meet, 1\ra$ where, $\la A, \meet, 1\ra$ is a pointed semilattice and moreover for all $a,b,c \in A$
\begin{align*}
&1 \imp a = a \tag{L1}\\
&a \imp a \ge 1 \tag{L2}\\
&(a \imp b) \meet (a \imp c) = a \imp (b \meet c) \tag{L3}\\
&a \imp b \le (c \imp a) \imp (c\imp b) \tag{L4}\\
&a \imp (b \imp c) \le  b \imp (a \imp c) \tag{L5}\\
& a \imp b, b \imp a \ge 1 \quad\text{implies}\quad a =b \tag{L6}
\end{align*}
Girard semilattices form a quasivariety $\mathsf{GS}$ that is not a variety (this can be shown by an easy reworking of known  examples).
Let us also observe that $\imp$ is  a BCI implication that is not a residuation (so a Girard semilattice is not in general a residuated semilattice in the sense of
\cite{NemWha1971}).

The following can be proved by standard techniques:

\begin{theorem} If $T=\{\imp, \meet, \one\}$ then $\mathsf{K_T}$ is the quasivariety  of Girard semilattices.
\end{theorem}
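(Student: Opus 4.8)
The plan is to lean on the algebraizability established in Theorem~\ref{MALL}. Since $\mathsf T$ is algebraizable with defining equation $\tau(p)=(p\meet\one\approx\one)$ and equivalence formulas $\Delta(p,q)=\{p\imp q,\ q\imp p\}$, the Blok--Pigozzi correspondence yields an explicit quasiequational presentation of the equivalent algebraic semantics $\mathsf{K_T}$: starting from the Hilbert presentation of $\mathsf T$ (the axioms of $\mathsf{MALL}$ lying in the language $T$, namely (HL1)--(HL3), (HL8), (HL9) and (HL12)--(HL14), together with the rules (MP) and (Adj)), $\mathsf{K_T}$ is axiomatized by (i) the equation $\tau(\alpha)$, that is $\alpha\meet\one\approx\one$, for each such axiom $\alpha$; (ii) the quasiequation attached to each rule, namely $\{\tau(p),\tau(p\imp q)\}\Rightarrow\tau(q)$ for (MP) and $\{\tau(p),\tau(q)\}\Rightarrow\tau(p\meet q)$ for (Adj); and (iii) the two algebraizability conditions $\tau\Delta(x,y)\Rightarrow x\approx y$ and $x\approx y\Rightarrow\tau\Delta(x,y)$. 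As both $\mathsf{K_T}$ and $\mathsf{GS}$ are quasivarieties, it suffices to prove that this system of quasiequations and the Girard-semilattice axioms (the pointed-semilattice identities together with (L1)--(L6)) are mutually derivable.

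For $\mathsf{GS}\subseteq\mathsf{K_T}$ I would verify each Blok--Pigozzi quasiequation in an arbitrary Girard semilattice, reading $\tau(p)$ as $p\ge\one$. Condition (iii) is transparent: its first half is exactly (L6) and its second half is (L2). A useful preliminary is that, for all $u,v$, instantiating (L3) with first argument $u\meet v$ and remaining arguments $u,v$ yields $((u\meet v)\imp u)\meet((u\meet v)\imp v)=(u\meet v)\imp(u\meet v)\ge\one$ by (L2); since a meet lies above $\one$ iff both factors do, this gives $(u\meet v)\imp u\ge\one$ and $(u\meet v)\imp v\ge\one$, the translations of (HL12) and (HL13). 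The remaining translated axioms reduce to the defining (in)equalities: $\tau(\text{HL}1)$ is (L2), $\tau(\text{HL}8)$ is trivial, $\tau(\text{HL}3)$ is (L5), $\tau(\text{HL}9)$ follows from (L1) and (L2), $\tau(\text{HL}2)$ follows by combining (L4) and (L5), and $\tau(\text{HL}14)$ collapses to (L2) via (L3). The rule (Adj) holds because $\one$ lies below a meet exactly when it lies below both factors.

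The one substantial point is soundness of (MP), the quasiequation that $a\ge\one$ and $a\imp b\ge\one$ imply $b\ge\one$. The key lemma is that the implication order and the semilattice order coincide. If $a\imp b\ge\one$, then instantiating (L3) with first argument $a$ and remaining arguments $a,b$ gives $(a\imp a)\meet(a\imp b)=a\imp(a\meet b)$; since both left-hand factors are $\ge\one$, by (L2) and the hypothesis, we get $a\imp(a\meet b)\ge\one$, and together with $(a\meet b)\imp a\ge\one$ from the preliminary, (L6) forces $a=a\meet b$, i.e. $a\le b$. Conversely $a\le b$ gives $a\imp b=(a\meet b)\imp b\ge\one$. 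Granting this coincidence, (MP) is immediate: $a\imp b\ge\one$ yields $a\le b$, so from $\one\le a$ and transitivity of the semilattice order one obtains $\one\le b$. I expect this order-coincidence to be the main obstacle, since it is precisely where (L2), (L3) and (L6) must act in concert to compensate for the fact that $\imp$ is not a residuation.

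For the reverse inclusion $\mathsf{K_T}\subseteq\mathsf{GS}$ I would use the dual half of algebraizability: an identity $\varphi\approx\psi$ holds throughout $\mathsf{K_T}$ if and only if $\vdash_{\mathsf T}\varphi\imp\psi$ and $\vdash_{\mathsf T}\psi\imp\varphi$. Each pointed-semilattice law and each of (L1)--(L5) then reduces to exhibiting the two required derivations in $\mathsf T$: commutativity, idempotency and associativity of $\meet$ from (HL12)--(HL14) with (Adj); (L3) from (HL14) in one direction and (HL12), (HL13) with monotonicity of $\imp$ in the other; (L4) and (L5) directly from (HL2) and (HL3); and (L1) from $\vdash_{\mathsf T}p\imp(\one\imp p)$ (via (HL9) and (HL3)) together with $\vdash_{\mathsf T}(\one\imp p)\imp p$ (instantiate (HL1) at $\one\imp p$, exchange by (HL3), and detach $\one$ using (HL8)). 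Condition (iii) is (L6) itself. Assembling these derivations shows that every member of $\mathsf{K_T}$ is a pointed semilattice satisfying (L1)--(L6), which completes the identification $\mathsf{K_T}=\mathsf{GS}$.
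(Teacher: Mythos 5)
Your proposal is correct against the definitions as printed, and on the inclusion the paper actually writes out it follows the same path: like the paper, you obtain antisymmetry, i.e.\ (L6), from the congruence formulas, (L2)--(L5) from the translations of (HL9), (HL14), (HL2) and (HL3), and (L1) from literally the same two derivations the paper displays. The genuine difference is coverage. The paper's proof, read literally, only establishes $\mathsf{K_T}\subseteq\mathsf{GS}$: it takes $\alg A\in\mathsf{K_T}$, defines the order by $(a\imp b)\meet\one=\one$, and checks the Girard-semilattice axioms, leaving the converse to ``standard techniques''. You instead write down the Blok--Pigozzi quasiequational presentation of $\mathsf{K_T}$ and verify both inclusions, and you correctly isolate the one nontrivial point of the converse, soundness of the translated (MP) rule in $\mathsf{GS}$. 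Your order-coincidence lemma settles it: from (L3) taken as a full equality, together with (L2) and (L6), the computation $a\imp(a\meet b)\ge\one$ and $(a\meet b)\imp a\ge\one$, hence $a=a\meet b$, is airtight (note it does not use (L7), unlike the paper's Lemma~\ref{reslemma}(1), which is stated only for $\vv V$). So your write-up is, if anything, more complete than the paper's.

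One caveat you should record. Your key lemma proves that \emph{every} Girard semilattice satisfies the quasiequation ``$a\imp b\ge\one$ implies $a\le b$'' --- precisely the quasiequation that Section~\ref{firstembedding} asserts fails in $\mathsf{GS}$ and uses to separate $\mathsf{GS}$ from the variety $\vv V$ of $\{\imp,\meet,1\}$-subreducts; indeed, combining your lemma with (L5) yields (L7) outright, so under the printed axioms $\vv V=\mathsf{GS}$. This is not a defect of your argument but an inconsistency internal to the paper: $\mathsf{K_T}$ itself validates that quasiequation, since $p\imp q\vdash_{\mathsf T}p\imp(p\meet q)$ is derivable from (HL1), (Adj), (HL14) and (MP), after which the congruence formulas force $a=a\meet b$; hence the theorem you are proving and the Section~\ref{firstembedding} separation claim cannot both be true, however $\mathsf{GS}$ is axiomatized. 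Be aware, though, that your proof leans essentially on (L3) being an equality: if the author intended only the inequality $(a\imp b)\meet(a\imp c)\le a\imp(b\meet c)$ (matching the one-directional axiom (HL14)), then your preliminary step and the coincidence lemma both collapse, and your verification of (MP) --- and with it the inclusion $\mathsf{GS}\subseteq\mathsf{K_T}$ --- would need a different argument. You should state that dependence explicitly.
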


\begin{proof} Let $\alg A \in \mathsf{K_T}$. We define a relation on $A$ by setting
$$
a \le b \qquad  \text{iff}\qquad  (a \imp b) \meet 1 = 1.
$$
The relation is reflexive by (H1), transitive by (H2)  and
the congruence formulas imply antisymmetricity. Hence $\le $ is a partial order on $A$ and
(H12)--(H14) imply that
$a \meet b$ is the greatest lower bound  of $a$ and $b$, making $\alg A$  a pointed semilattice in which (L6) holds.

The rest consists of standard calculations; first observe that (L2) is a direct consequence of (HL9), (L3) comes in the same fashion from
(HL14), (L4) from (HL2) and (HL3) and (L5) from (HL3).   For (L1) we observe that the following derivations hold in $\sf T$:
\small

\setlength{\arraycolsep}{.2 in}
$$
\begin{array}{ll}
\one \imp (p \imp p) &\text{(HL9)}\\
p\imp(\one \imp p) &\text{(HL3) + (MP)}\\
\end{array}
$$
\small
\setlength{\arraycolsep}{.2 in}
$$
\begin{array}{ll}
(\one \imp p) \imp (\one \imp p) &\text{(HL1)}\\
\one \imp ((\one \imp p) \imp p )&\text{(HL3) + (MP)}\\
(\one \imp p) \imp p &\text{(HL8) + (MP))}\\
\end{array}
$$
\normalsize
Via the usual translation this implies (L1).
\end{proof}

Introducing the join causes no problems; an algebra $\la A,\imp,\join,\meet,1\ra$ is a {\bf Girard lattice} if
\begin{enumerate}
\ib $\la A, \join, \meet\ra$ is a lattice;
\ib $\la A,\imp, \meet,1\ra$ is a Girard semilattice;
\ib for all $a,b,c \in A$
\begin{equation}
(a \imp c) \meet (b \imp c) = (a \join b) \imp c. \tag{L6}
\end{equation}
\end{enumerate}
Clearly the equivalent algebraic semantics of the $\{\imp,\join\,\meet,\one\}$-fragment is the quasivariety $\mathsf{GL}$ of Girard lattices.

If we consider the equivalent algebraic semantics of the positive (i.e. without negation) fragment
of linear logic, then we have a BCI-implication that, thanks to (L6) and (L7), forms a residuated pair with $\cdot$. This implies that the equivalent algebraic semantics is is just the variety $\mathsf{CRL}$ of {\bf commutative residuated lattices}, with or without bounds.

\section{An embedding}\label{firstembedding}

In this section we would like to show that the positive fragment of $\sf MALL$ is not a conservative extension of the $\{\imp,\meet,\one\}$-fragment. The translation into algebraic terms consist in proving that
the $\{\imp,\meet,1\}$-subreducts of algebras in $\sf CRL$ form a proper subclass (as a matter of fact a subvariety) of $\sf{GS}$.
This is a consequence of the following well-known fact proved first in \cite{Agliano1996b} and rediscovered many times in the literature.

\begin{theorem} \cite{Agliano1996b} For every variety $\vv V$ of commutative residuated lattices, the class of $\{\meet,\imp,1\}$-subreducts of $\vv V$ is a variety.
\end{theorem}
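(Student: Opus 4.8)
Write $\mathcal{K}$ for the class of $\{\meet,\imp,1\}$-subreducts of $\vv V$, and for $\alg B\in\vv V$ let $\alg B^-$ denote its $\{\meet,\imp,1\}$-reduct. The plan is to prove that $\mathcal{K}$ is closed under $\II$, $\SU$, $\PP$ and $\HH$; by Birkhoff's theorem this makes $\mathcal{K}$ a variety. Closure under $\II$ and $\SU$ is immediate, since a subalgebra of a subreduct is again a subreduct. For $\PP$ one uses that the reduct functor commutes with direct products: if $\alg A_i\le\alg B_i^-$ with $\alg B_i\in\vv V$, then $\prod_i\alg A_i\le\prod_i(\alg B_i^-)=(\prod_i\alg B_i)^-$ and $\prod_i\alg B_i\in\vv V$ because $\vv V$ is a variety. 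Thus $\mathcal{K}$ is at least a prevariety, and the whole content of the theorem is the closure under $\HH$.

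Closure under $\HH$ is equivalent to closure under quotients, so I would fix $\alg A\le\alg B^-$ with $\alg B\in\vv V$ and a congruence $\th\in\Con{A}$ of the reduct, and aim to show $\alg A/\th\in\mathcal{K}$. The idea is to transport $\th$ to the full algebra $\alg B$: let $\Th=\operatorname{Cg}^{\alg B}(\th)$ be the congruence of the commutative residuated lattice $\alg B$ generated by the pairs of $\th$, viewed inside $B\times B$. Since $\vv V$ is a variety we have $\alg B/\Th\in\vv V$, and the composite $\alg A\hookrightarrow\alg B\tla\alg B/\Th$ is a homomorphism of $\{\meet,\imp,1\}$-algebras with kernel $\Th\cap(A\times A)$. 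As $\th\sse\Th\cap(A\times A)$ holds trivially, the proof reduces entirely to the reverse inclusion
\[
\Th\cap(A\times A)\sse\th ,
\]
for once this is known the induced map $\alg A/\th\to\alg B/\Th$ is an embedding of reducts, exhibiting $\alg A/\th$ as a subreduct of a member of $\vv V$.

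The heart of the matter --- and the step I expect to be the main obstacle --- is this restriction property: passing to the larger signature of $\vv V$ to generate $\Th$ must not identify any elements of $A$ beyond those already identified by $\th$. To handle it I would invoke the description of congruences of a commutative residuated lattice through the class of $1$: $\Th$ is determined by the convex normal submonoid $H=\{x\in B:x\mathrel\Th 1\}$, and because $\Th$ is generated by $\th$, $H$ is the convex submonoid of $\alg B$ generated by the set $\{(a\imp b)\meet(b\imp a)\meet 1:a\mathrel\th b\}$. Likewise $\th$ is governed by the corresponding filter $N$ of the reduct $\alg A$. In these terms the required inclusion becomes $H\cap A\sse N$: an arbitrary element of $H$ dominates a finite product of generators, and one must use residuation --- in particular the law $x\le y\imp z$ iff $xy\le z$ --- together with the closure of $A$ under $\imp$ and $\meet$ to pull such a product witness back inside $A$ and rewrite it, through the congruence-filter calculus, as a witness already lying in $N$. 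This is the only place where the lattice-ordered monoid structure of $\vv V$ is genuinely used and where the routine but delicate residuation manipulations are required; all the remaining steps are formal.
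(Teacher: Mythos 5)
Your strategy is sound and does complete to a correct proof, but first a point of comparison: the paper does not prove this theorem at all --- it is quoted from \cite{Agliano1996b} --- and the closest in-paper relative is Section \ref{firstembedding}, which proceeds quite differently. There the subreduct class is axiomatized equationally (Girard semilattices plus the equation (L7)), and every algebra of that variety is then embedded in the commutative residuated lattice $\alg D(\alg A)$ built from the frame of semilattice filters. Your route --- closure under $\II,\SU,\PP$ is formal, and for $\HH$ one extends $\th\in\Con A$ to $\Th=\operatorname{Cg}^{\alg B}(\th)$ and proves $\Th\cap(A\times A)=\th$ --- is genuinely different, and it has a real advantage: since $\alg B/\Th\in\vv V$ automatically, it yields the statement uniformly for \emph{every} subvariety $\vv V$ of $\sf CRL$, exactly the generality of the cited theorem, whereas the frame embedding as given lands in $\sf CRL$ rather than in $\vv V$. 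What the embedding approach buys instead is an explicit finite axiomatization of the subreduct variety, which your argument does not produce.

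The one place where your sketch asserts more than it justifies is the sentence ``likewise $\th$ is governed by the corresponding filter $N$ of the reduct.'' For the full algebra $\alg B$, filter-determination of congruences is Theorem \ref{confil}; for the $\{\meet,\imp,1\}$-reduct it is \emph{not} formal, and it is precisely here that the identity (L7), $x \le ((x\imp y)\meet 1)\imp y$ --- valid in every commutative residuated lattice, expressible in the reduct language, hence inherited by subreducts and preserved by their quotients --- must be invoked. Concretely: if $c\mathrel\Th d$ with $c,d\in A$, then some product $g_1\cdots g_n$ of generators ($g_i\in A$, $g_i\mathrel\th 1$) lies below $h=(c\imp d)\meet(d\imp c)\meet 1\in A$; residuation folds this into $g_1\le g_2\imp(\cdots(g_n\imp h)\cdots)$, an inequality holding entirely inside $\alg A$, and the congruence calculus (with $1\imp x=x$) gives $h\mathrel\th 1$ --- this is your ``pull the witness back into $A$'' step, and it works as you describe. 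But to pass from $h\mathrel\th 1$ to $c\mathrel\th d$ you need $\alg A/\th$ to satisfy the quasiequation ``$a\imp b\ge 1$ and $b\imp a\ge 1$ imply $a=b$,'' and quotients do not preserve quasiequations: the paper itself notes that Girard semilattices, which support every step of your argument except (L7), form a quasivariety that is \emph{not} a variety, so some such identity is indispensable and your phrase ``congruence-filter calculus'' conceals the only non-routine point. With (L7) the step goes through ($\bar h=\bar 1$ forces $(\bar c\imp\bar d)\meet \bar 1=\bar 1$, whence $\bar c\le \bar 1\imp\bar d=\bar d$, and symmetrically), so name this identity explicitly and your proof is complete.
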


It is very easy to find a quasiequation holding in the varieties of $\{\imp,\meet,1\}$-subreducts but not in $\sf GS$; in fact in $\sf CRL$, $\imp $ is a residuation and this somehow carries over
 in the sense that  for all $\{\imp,\meet,1\}$-subreduct $\alg A$ and for all $a,b \in A$
$$
a \imp b\ge 1 \quad\text{implies}\quad   a \le b.
$$
Now it is easily checked that this quasiequation does not hold in $\sf GS$ (since a Girard semilattice in not in general a residuated semilattice).

Let $\vv V$ be the variety of Girard semilattices satisfying the further equation
\begin{equation}
x \le ((x \imp y) \meet 1) \imp y\tag{L7}
\end{equation}
We claim that $\vv V$ is the variety of $\{\imp,\meet,1\}$-subreducts of $\sf CRL$.
Now it is easy to show that (L7) holds in any $\{\imp,\meet,1\}$-subreduct and implies (L6). Hence we only need to show that any member of $\vv V$ is embeddable in a commutative residuated lattice.
First let's prove that the algebras in $\vv V$ have {\em residuals without residuations}:

\begin{lemma}\label{reslemma} Let $\alg A \in \vv V$ and let $a,b,c \in A$; then
\begin{enumerate}
\item $a \le b$ if and only if $a \imp b \ge 1$;
\item $a \le (a \imp b) \imp b$;
\item $a \le b$ implies  $b \imp c \le  a \imp c$ and $c \imp a \le c \imp b$.
\end{enumerate}
\end{lemma}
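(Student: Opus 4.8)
The plan is to work entirely inside $\vv V$, using the Girard-semilattice axioms (L1)--(L6) together with the defining equation (L7); the latter is the extra axiom of $\vv V$ over $\mathsf{GS}$ and, as will become clear, is exactly what powers the $\Leftarrow$ half of item (1) (recall that this implication is the quasiequation that separates the two classes). A preliminary observation I would record is that (L5) in fact holds as an equality $a\imp(b\imp c)=b\imp(a\imp c)$: one applies (L5) once and then again with $a$ and $b$ interchanged to get both inequalities. I will call this \emph{exchange}, and I will use freely that, by (L3), the map $a\imp(-)$ preserves binary meets and is therefore monotone, so $u\le v$ entails $a\imp u\le a\imp v$.

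For item (1), the forward direction is the order half already carried by the semilattice: if $a\le b$ then $a\meet b=a$, so (L3) gives $a\imp a=(a\imp a)\meet(a\imp b)$, whence $a\imp a\le a\imp b$, and (L2) yields $a\imp b\ge a\imp a\ge 1$. The reverse direction is where (L7) enters: if $a\imp b\ge 1$ then $(a\imp b)\meet 1=1$, and instantiating (L7) at $x=a,\ y=b$ together with (L1) gives $a\le\big((a\imp b)\meet 1\big)\imp b=1\imp b=b$. With (1) available, item (2) is immediate from exchange: $a\imp\big((a\imp b)\imp b\big)=(a\imp b)\imp(a\imp b)\ge 1$ by (L2), so the reverse direction of (1) gives $a\le(a\imp b)\imp b$.

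For item (3) I would treat the two monotonicities separately. The isotone direction in the second coordinate is quick: (L4) gives $a\imp b\le(c\imp a)\imp(c\imp b)$, and if $a\le b$ then $a\imp b\ge 1$ by (1), so $(c\imp a)\imp(c\imp b)\ge 1$ and (1) again yields $c\imp a\le c\imp b$. The antitone direction in the first coordinate is the main obstacle, since the axioms only record the prefixing form (L4) and I must first bootstrap the suffixing inequality $a\imp b\le(b\imp c)\imp(a\imp c)$. To do this I start from the (L4) instance $b\imp c\le(a\imp b)\imp(a\imp c)$, apply the second-coordinate monotonicity just established with fixed first argument $b\imp c$ to get $(b\imp c)\imp(b\imp c)\le(b\imp c)\imp\big((a\imp b)\imp(a\imp c)\big)$, note that the left-hand side is $\ge 1$ by (L2), and then use exchange to rewrite $(b\imp c)\imp\big((a\imp b)\imp(a\imp c)\big)$ as $(a\imp b)\imp\big((b\imp c)\imp(a\imp c)\big)$; since this is $\ge 1$, the reverse direction of (1) delivers suffixing. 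Finally, if $a\le b$ then $a\imp b\ge 1$, so suffixing forces $(b\imp c)\imp(a\imp c)\ge 1$, and one last appeal to (1) gives $b\imp c\le a\imp c$.
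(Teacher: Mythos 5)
Your proof is correct, and for items (1) and (2) it is essentially the paper's own argument: the forward half of (1) via (L3) applied to $a\meet b=a$ plus (L2), the converse via (L7) and (L1), and (2) from the exchange form of (L5) (the paper also silently uses exchange as an equality, which is justified exactly as you say) together with (L2) and the converse of (1). The divergence is confined to item (3), which the paper dismisses as ``routine using (1), (L3) and (L5)''. One natural reading of that hint: isotonicity in the second coordinate comes straight from (L3) (this is your own preliminary observation, which you then bypass in favour of an (L4)-plus-(1) detour); and antitonicity in the first coordinate comes from item (2), since $a\le b\le (b\imp c)\imp c$ gives $a\imp((b\imp c)\imp c)\ge 1$ by (1), whence (L5) yields $(b\imp c)\imp(a\imp c)\ge 1$ and (1) finishes --- no appeal to (L4) at all. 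You instead bootstrap the suffixing law $a\imp b\le(b\imp c)\imp(a\imp c)$ from the prefixing axiom (L4), second-coordinate monotonicity, (L2) and exchange, and then specialize at $a\imp b\ge 1$. Both arguments are sound and of comparable length; yours has the merit of isolating suffixing as an explicitly stated reusable inequality, while the hinted route is marginally leaner, recycling item (2) and never invoking (L4) in this lemma.
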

\begin{proof} Suppose $a \le b$; then $a \meet b = a$. Then by (L3)
$$
(a \imp a) \meet (a \imp b) = a \imp (a \meet b) = a \imp a;
$$
so $a \imp b \ge a \imp a \ge 1$ by (L2).

Conversely, assume $a \imp b \ge 1$; then by (L7) and (L1)
$$
a \le ((a \imp b) \meet 1) \imp b = 1 \imp b = b.
$$

For (2), by (L5) and (L2) we get
$$
a \imp ((a \imp b) \imp b) = (a \imp b) \imp (a \imp b) \ge 1
$$
and by (1) $a \le (a \imp b ) \imp b$.

The proof of (3) is routine using (1), (L3) and (L5).
\end{proof}

The embedding we are going use is based on the theory of frames developed in \cite{Dunn1993}.  Let $\alg A \in \vv V$ and let $\Gamma_\alg A$ be the set of semilattice filters of $\alg A$; we say that a subset $X \sse \Gamma_\alg A$ is {\bf hereditary} if for all $F,G \in \Gamma_\alg A$,  $F \in X$ and $F \sse G$ implies $G \in X$. We also define for $a \in A$,  $\mathbf a = \{F \in \Gamma_\alg A: a \in F\}$ and we note that $\mathbf a$ is hereditary. Note that the the intersection of any family of hereditary subsets is hereditary; so we can define a closure operator in which the closed subsets are precisely the hereditary subsets of $\Gamma_\alg A$. It follows that the hereditary subsets of $\Gamma_\alg A$ form an algebraic lattice $D(\alg A)$ ordered by inclusion.

 Next we define a ternary relation on $\Gamma_\alg A$; for $F,G,H \in \Gamma_\alg A$
$$
R(F,G,H)\qquad\text{if and only if}\qquad \text{for all $a,b \in A$, $a \in F$ and $a \imp b \in G$ implies $b \in H$}.
$$
This relation allows us to introduce additional operations: if $X,Y \in D(\alg A)$
\begin{align*}
&X \circ Y = \{H: \exists F \in Y, \exists G \in X \ \text{with}\ R(F,G,H) \}\\
&X \imp Y = \{H: \forall F,G \ \text{if $R(F,H,G)$ and $F \in X$, then $G \in Y$}\}.\\
\end{align*}
Of course the relationships between equations satisfied in $D(\alg A)$ and the properties of $R$ are relevant. In \cite{Dunn1986} there is a long list of these correspondences  with no proofs, simply quoting the work of R. Routley and R. Meyer on the semantics of entailment \cite{RoutleyMeyer1973}; some proofs are indeed there, but they are so embedded in the general abstract theory of entailment that their connection to this algebraic setting is not immediately  clear. That's why here we prefer to present direct proofs.

\begin{lemma} For any $\alg A \in \vv V$, $\la D(\alg A),\circ,\one\ra$ is a commutative monoid.
\end{lemma}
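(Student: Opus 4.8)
The plan is to verify well-definedness together with the three monoid laws, in each case reducing the set-theoretic identity on hereditary sets to a first-order condition on the ternary relation $R$ and then extracting that condition from the axioms (L1)--(L7) and from Lemma \ref{reslemma}. The key bookkeeping device is the remark that $R(F,G,H)$ holds exactly when $S(F,G) \sse H$, where $S(F,G) = \{b \in A : \exists a \in F,\ a \imp b \in G\}$. Consequently, writing $F \odot G$ for the semilattice filter generated by $S(F,G)$, one has $R(F,G,H)$ iff $F \odot G \sse H$, so $F \odot G$ is the $\sse$-least filter $H$ with $R(F,G,H)$. Since $\odot$ is monotone in each argument and $X,Y \in D(\alg A)$ are upward closed under $\sse$, every membership $H \in X \circ Y$ can be witnessed by some $F \in Y$, $G \in X$ with $F \odot G \sse H$; this reduces the monoid laws to the corresponding laws for $\odot$ on filters.

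Well-definedness and commutativity are short. If $H \in X \circ Y$ and $H \sse H'$ then $R(F,G,H)$ plainly gives $R(F,G,H')$, so $X \circ Y$ is hereditary. For commutativity it suffices to show $R$ is symmetric in its first two arguments: assuming $R(F,G,H)$, take $a \in G$ and $a \imp b \in F$; by Lemma \ref{reslemma}(2) we have $a \le (a \imp b) \imp b$, and since $G$ is a filter this forces $(a \imp b)\imp b \in G$, whence applying $R(F,G,H)$ to the pair $(a \imp b, b)$ yields $b \in H$. Thus $R(G,F,H)$ and $X \circ Y = Y \circ X$.

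For the identity I take $\one = \mathbf 1 = \{F \in \Gamma_\alg A : 1 \in F\}$ (the instance $a = 1$ of the notation $\mathbf a$), which is hereditary. To see $X \circ \mathbf 1 \sse X$, if $R(F,G,H)$ with $1 \in F$ and $G \in X$, then instantiating $a = 1$ and using (L1) gives $b \in G \Rightarrow b \in H$, i.e.\ $G \sse H$, so $H \in X$ by heredity. For $X \sse X \circ \mathbf 1$, given $H \in X$ use the filter $\upa 1 \in \mathbf 1$: if $a \ge 1$ and $a \imp b \in H$, then Lemma \ref{reslemma}(3) gives $a \imp b \le 1 \imp b = b$, so $b \in H$; hence $R(\upa 1, H, H)$ and $H \in X \circ \mathbf 1$. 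Commutativity then gives $\mathbf 1 \circ X = X$.

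The main obstacle is associativity. Using the least-filter reduction above, $(X \circ Y) \circ Z = X \circ (Y \circ Z)$ will follow once I prove that $\odot$ is associative on filters, namely $M \odot (F \odot G) = (M \odot F) \odot G$. I expect this to be the genuinely technical step: unwinding the generated filters forces one to work with finite meets of generators, and to control them I would use (L3) to move $\imp$ across meets in the second coordinate, Lemma \ref{reslemma}(3) for the monotonicity needed when a generator is replaced by a smaller meet, and the exchange and transfer laws (L5) and (L4) to permute the order in which the implications coming from $M$, $F$ and $G$ are applied. Concretely, to prove one inclusion I would take a generator $b$ of $M \odot (F \odot G)$, expand its witness $a \imp b \in F \odot G$ as lying above a finite meet of basic elements of $S(F,G)$, and then reassemble these --- via (L4), (L5) and (L3) --- into a witness exhibiting $b$ in $(M \odot F) \odot G$; the reverse inclusion is symmetric. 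The commutativity of $\odot$ already established can be used to halve this work, but the finite-meet manipulation is where the real effort lies.
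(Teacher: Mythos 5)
Your verification of closure, commutativity and the unit law is correct and essentially matches the paper's own proof: the paper also derives $R(F,G,H)\Rightarrow R(G,F,H)$ from Lemma \ref{reslemma}(2), and uses the positive cone $\nabla_A$ (your $\upa 1$) as the witness for the unit. Your reduction of the monoid laws to a binary operation on filters is also sound: since $R(F,G,H)$ holds iff $S(F,G)\sse H$, and $\odot$ is monotone in both arguments while the sets in $D(\alg A)$ are hereditary, associativity of $\circ$ does follow from associativity of $\odot$, with $U=F\odot G$ serving as the intermediate witness. The genuine gap is that you never prove that filter-level associativity: you announce a strategy (``I would take a generator\dots and reassemble via (L4), (L5) and (L3)'') but do not carry it out. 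That identity is precisely the technical heart of the lemma --- it is the one computation the paper actually performs in detail --- so as it stands your proof is incomplete at exactly the decisive step, and a sketch naming the axioms to be used is not a substitute for the manipulation itself.

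Moreover, the difficulty you anticipate (controlling finite meets of generators of the generated filter) is illusory, which suggests the plan was not tested: the set $S(F,G)=\{b:\exists a\in F,\ a\imp b\in G\}$ is \emph{already} a semilattice filter, so $F\odot G=S(F,G)$ and no generator bookkeeping arises. Indeed, upward closure follows from Lemma \ref{reslemma}(3); and if $a\imp d\in G$ and $a'\imp d'\in G$, then with the common witness $a\meet a'\in F$, Lemma \ref{reslemma}(3) and (L3) give $(a\meet a')\imp(d\meet d')=((a\meet a')\imp d)\meet((a\meet a')\imp d')\in G$. This $S(F,G)$ is exactly the filter $L=\{d:b\le a\imp d\ \text{for some}\ a\in F,\ b\in G\}$ that the paper constructs in its associativity argument. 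With that observation, one inclusion of associativity is a single application of (L4): if $a\in F$, $a\imp e\in G$ and $e\imp d\in K$, then $e\imp d\le(a\imp e)\imp(a\imp d)$, so $(a\imp e)\imp(a\imp d)\in K$ and $d\in F\odot(G\odot K)$ with witness $a\imp e\in G$; the reverse inclusion then follows formally from the commutativity of $\odot$ you have already established (note $S(F,G)=S(G,F)$, both being filters each contained in every filter containing the other). Up to the reformulation through $\odot$, this is the paper's own argument; filling in these few lines would complete your proof.
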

\begin{proof} Proving that $D(\alg A)$ is closed under $\circ$ is straightforward.
 Let then $X,Y \in D(\alg A)$ and suppose that $H \in X \circ Y$; then there is an $F \in Y$ and a $G \in F$ with $R(F,G,H)$. Let $a \in G$ and $a \imp b \in F$; then (by Lemma \ref{reslemma} $a \le (a \imp b) \imp b \in G$. So since $a \imp b \in F$ and $R(F,G,H)$ we get $b \in H$; so $R(G,F,H)$ holds and hence $H \in Y \circ X$. This shows that $\circ$ is commutative.

Let $\nabla_A$ be the {\em positive cone} of $\alg A$, i.e. the principal filter generated by $1$. Note that
for any $F \in \Gamma_\alg A$, $R(F,\nabla_A,F)$ holds and, since
$\nabla_A \in \one$, we get at once that $ X \subseteq \one \circ  X$. Conversely, let
$H \in \one \circ X$; then there is an $F \in X$ and a $G \in \one$ with
$R(F,G,H)$. If $a \in F$, then $a \imp a \ge 1 \in G$ and hence
$a \in H$, so that $F\sse H$. But $X$ is hereditary and $F \in X$, so
$H \in X$ and eventually $\one\circ X = X$.

Associativity requires more work.  Let $X,Y,Z \in D(\alg A)$ with with $H \in (X\circ Y)\circ Z$; then there is an $F \in Z$ and a $U \in X\circ Y$ with $R(F,U,H)$ and  a $K \in X$ and a $G \in Y$ with $R(G,K,U)$.
Let $L=\{ d \in A: b \le a \imp d\ \text{for some}\ a \in F, b \in G\}$; then using Lemma \ref{reslemma} we can show that $L \in \Gamma_\alg A$ and  clearly $R(F,G,L)$.   Assume now $d \in L$ and $d \imp c \in K$; again by
Lemma \ref{reslemma}, (L4) and (L5) we get
$$
d \imp c \le (a \imp d) \imp (a \imp c)
$$
 so $(a \imp d) \imp (a \imp c) \in K$. Since $d \in L$ there are $a \in F$ and $b \in G$ with $b \le a \imp d$, so $a \imp d  \in G$ and, since $R(G,K,U)$, we get $a \imp c \in U$. But $a \in F$, $a \imp c \in U$ and $R(F,U,H)$ implies $c \in H$. Hence we conclude that $R(L,K,H)$.

Now by definition $L \in Y \circ Z$ and hence, since $R(F,G,L)$,  $H \in X \circ (Y \circ Z)$; we have thus proved that $(X \circ Y) \circ Z \sse X \circ (Y \circ Z)$. The opposite inclusion follows from a similar argument, hence
$\circ$ is associative.
\end{proof}

\begin{lemma} For each $\alg A \in \vv V$, $(\imp,\circ)$ form a residuated pair w.r.t. the lattice ordering of $D(\alg A)$.
\end{lemma}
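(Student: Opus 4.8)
The plan is to show that for all $X, Y, Z \in D(\alg A)$ we have $X \circ Y \sse Z$ if and only if $Y \sse X \imp Z$; this is precisely the residuation condition with respect to the lattice ordering (set inclusion) of $D(\alg A)$. Since the previous lemma established that $\circ$ is a commutative monoid operation and the ordering on $D(\alg A)$ is inclusion, unwinding the two inclusions directly against the defining formulas for $\circ$ and $\imp$ should suffice. I would not appeal to any abstract adjunction machinery but argue the two implications by hand, exploiting the definitions of $R$, $\circ$ and $\imp$ together with Lemma \ref{reslemma}.

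First I would prove the forward direction. Assume $X \circ Y \sse Z$ and let $F \in Y$; I must show $F \in X \imp Z$. By definition of $\imp$, this means: for all $G, H$, if $R(G, F, H)$ and $G \in X$ then $H \in Z$. So fix such $G \in X$ and $H$ with $R(G, F, H)$. Since $F \in Y$, $G \in X$ and $R(G, F, H)$, the definition of $\circ$ gives $H \in X \circ Y$, and the hypothesis $X \circ Y \sse Z$ yields $H \in Z$. This establishes $F \in X \imp Z$, hence $Y \sse X \imp Z$. This direction is essentially a direct rewriting of the definitions and should be routine.

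For the converse I would assume $Y \sse X \imp Z$ and show $X \circ Y \sse Z$. Let $H \in X \circ Y$; then there exist $F \in Y$ and $G \in X$ with $R(F, G, H)$. (I will need to be careful about the exact order of the arguments in the definitions of $\circ$ and $\imp$, matching them against the relation $R(F, H, G)$ appearing in the defining formula for $\imp$; the commutativity of $\circ$ established in the previous lemma, together with the symmetry property $R(F,G,H) \Leftrightarrow R(G,F,H)$ proved there via Lemma \ref{reslemma}, gives the flexibility to align the variables correctly.) From $F \in Y \sse X \imp Z$ and the definition of $\imp$, together with $G \in X$ and the appropriate instance of $R$, I conclude $H \in Z$. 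Hence $X \circ Y \sse Z$.

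The main obstacle I anticipate is not conceptual but bookkeeping: the defining formulas for $\circ$ and $\imp$ place the three filters in different positions of the ternary relation $R$, so the residuation argument only closes up after invoking the symmetry $R(F,G,H) \Leftrightarrow R(G,F,H)$, which itself rested on the fact (from Lemma \ref{reslemma}(2)) that $a \le (a \imp b) \imp b$. I would therefore state that symmetry explicitly at the outset and then let the two inclusions fall out as direct unwindings of the definitions. Once the variable positions are correctly matched, no further properties of $\alg A$ beyond those already used for the monoid structure are needed.
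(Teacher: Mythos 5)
Your residuation argument itself is sound and close in spirit to the paper's, with one cosmetic difference: you prove $X \circ Y \sse Z$ iff $Y \sse X \imp Z$, and because the definitions of $\circ$ and $\imp$ place their arguments in different slots of the ternary relation, this form forces you to invoke the symmetry $R(F,G,H) \Leftrightarrow R(G,F,H)$ (equivalently, the commutativity of $\circ$) from the preceding lemma -- which you correctly anticipate and flag. The paper instead proves the other division, $X \circ Y \sse Z$ iff $X \sse Y \imp Z$: with that choice the element being tested sits exactly in the middle slot of $R$ in both defining formulas, so both implications are pure unwindings of the definitions with no appeal to symmetry or commutativity at all. Since $\circ$ is commutative, the two forms are equivalent and either yields the residuated pair; the difference is one of economy, not correctness.

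The genuine gap is that you never verify that $X \imp Y$ belongs to $D(\alg A)$, i.e., that it is a hereditary collection of filters. This is half of the paper's proof and not a formality: the previous lemma only checked closure under $\circ$, and without closure under $\imp$ the symbol $\imp$ is not an operation on $D(\alg A)$, so the assertion ``residuated pair w.r.t.\ the lattice ordering of $D(\alg A)$'' is not even well posed. The missing step is short: $R(F,H,G)$ is antitone in its middle argument -- if $H \sse H'$ and $R(F,H',G)$ holds, then for $a \in F$ and $a \imp b \in H \sse H'$ we get $b \in G$, so $R(F,H,G)$ holds. Hence if $H \in X \imp Y$ and $H \sse H'$, any instance $R(F,H',G)$ with $F \in X$ yields $R(F,H,G)$ and therefore $G \in Y$, giving $H' \in X \imp Y$. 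You should state and prove this heredity claim before the residuation equivalence; with it added, your proof is complete.
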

\begin{proof} Since we already know that $D(\alg A)$ is closed under $\circ$ we have only to check that it is closed under $\imp$ as well. Let then $H \in X \imp Y$; then if $H \sse H'$ and $a \imp b \in H'$, then
$a \imp b \in H$. It follows that if $R(F,H',G)$, then $R(F,H,G)$ for all $F,G \in \Gamma_\alg A$ and so $H' \in X\imp Y$ which is then hereditary.

Next we have to show that
$$
X \circ Y \sse Z \qquad\text{if and only if}\qquad X \sse Y \imp Z.
$$
Assume then that $X \circ Y \subseteq Z$ and let $H \in X$. Let $F \in Y$ such that $R(F,H,G)$; then by definition $G \in X\circ Y$ and hence $G \in Z$. But this implies $H \in Y \imp Z$, as wished.  Conversely suppose $X \le Y \imp Z$ and let $H \in X \circ Y$; then there are $F \in Y$ and $G \in X$ with $R(F,G,H)$. But then $G \subseteq Y \imp Z$, so if $F \in Y$ and $R(F,G,H)$, then $H \in  Z$ as wished.
\end{proof}

Hence we have shown that:

\begin{theorem} For any $\alg A \in \vv V$, $\alg D(\alg A) =\la D(\alg A), \imp,\join, \meet,\circ, \one\ra$ is commutative residuated lattice.
\end{theorem}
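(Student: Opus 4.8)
The plan is simply to assemble the three structural facts already established for $\alg D(\alg A)$, since membership in the variety $\mathsf{CRL}$ of commutative residuated lattices requires exactly three things on a common carrier: a lattice, a commutative monoid sharing the same unit, and a binary operation that is the residual of the monoid product with respect to the lattice order.

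First I would record the lattice structure of $\la D(\alg A),\join,\meet\ra$. The hereditary subsets of $\Gamma_\alg A$ are closed under arbitrary intersections and under arbitrary unions (a union of up-sets is again an up-set), so meet is intersection and join is union; this is exactly the algebraic lattice of closed sets of the closure operator described before the construction, ordered by inclusion. Next, the second-to-last lemma already gives that $\la D(\alg A),\circ,\one\ra$ is a commutative monoid, with unit $\one=\{F\in\Gamma_\alg A: 1\in F\}$. Finally, the last lemma establishes precisely the residuation equivalence $X\circ Y\sse Z$ if and only if $X\sse Y\imp Z$, that is, $(\circ,\imp)$ is a residuated pair for the inclusion order. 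Combining these, $\alg D(\alg A)=\la D(\alg A),\imp,\join,\meet,\circ,\one\ra$ satisfies every defining condition of a commutative residuated lattice.

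The one point deserving an explicit remark is that the order witnessing residuation coincides with the lattice order: both are set inclusion on $D(\alg A)$, so the three ingredients live on the same ordered carrier and combine without friction. Consequently there is no genuine obstacle in the theorem itself. The real work --- associativity of $\circ$ together with the identity law, and the residuation identity --- has already been discharged in the two preceding lemmas (associativity being the only delicate argument, via the auxiliary filter $L$), so the present statement is a bookkeeping step that follows by quoting those lemmas once the lattice reduct of $D(\alg A)$ is identified with $(\cap,\cup)$.
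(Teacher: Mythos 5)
Your proposal is correct and matches the paper exactly: the theorem there is stated with ``Hence we have shown that:'' and carries no separate proof, being precisely the assembly of the algebraic lattice $D(\alg A)$ of hereditary sets, the commutative monoid lemma, and the residuated pair lemma, all with respect to the same inclusion order. Your explicit observation that joins are unions (since a union of hereditary sets is hereditary) and that the residuation order coincides with the lattice order is a harmless, indeed welcome, sharpening of what the paper leaves implicit.
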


Finally we prove the embedding.

\begin{theorem} Any algebra $\alg A \in \vv V$ is embeddable in $\alg D(\alg A)$.
\end{theorem}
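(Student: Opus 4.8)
The plan is to show that the canonical map $\eta\colon \alg A\to \alg D(\alg A)$ given by $\eta(a)=\mathbf a=\{F\in\Gamma_\alg A: a\in F\}$ is an embedding. Since $\alg A$ is an algebra of type $\{\imp,\meet,1\}$, it suffices to check that $\eta$ is injective and preserves $\meet$, $\imp$ and the constant $1$; the lattice join and the monoidal product $\circ$ of $\alg D(\alg A)$ do not enter, because they are not operations of $\alg A$. That each $\mathbf a$ is hereditary has already been observed, so $\eta$ is well defined.

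First I would establish that $\eta$ is an order embedding, i.e.\ $a\le b$ iff $\mathbf a\sse\mathbf b$. The forward direction is immediate since filters are upward closed. For the converse I would test $\mathbf a\sse\mathbf b$ against the principal filter $\upa a$: as $a\in\upa a$, heredity forces $b\in\upa a$, whence $a\le b$; antisymmetry of $\le$ (which holds by (L6)) then yields injectivity. Preservation of $\meet$ is equally direct: a semilattice filter $F$ is upward closed and $\meet$-closed, so $a\meet b\in F$ iff $a\in F$ and $b\in F$, giving $\mathbf{a\meet b}=\mathbf a\cap\mathbf b$, which is the meet in $D(\alg A)$. Preservation of the constant amounts to $\mathbf 1=\one$, which holds because the multiplicative unit of $\alg D(\alg A)$ is exactly the hereditary set of filters containing $1$ (note $\nabla_A\in\mathbf 1$).

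The crux, and the step I expect to be the main obstacle, is preservation of the implication, $\mathbf{a\imp b}=\mathbf a\imp\mathbf b$. The inclusion $\mathbf{a\imp b}\sse\mathbf a\imp\mathbf b$ unwinds directly: if $a\imp b\in H$ and $R(F,H,G)$ with $a\in F$, instantiating the defining clause of $R$ at the pair $(a,b)$ gives $b\in G$. The reverse inclusion carries the work. Given $H\in\mathbf a\imp\mathbf b$, I would form $G=\{y\in A: a\imp y\in H\}$ and prove it is a semilattice filter: upward closure uses Lemma \ref{reslemma}(3), and closure under $\meet$ uses (L3) together with the fact that $H$ is itself a $\meet$-closed filter. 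Using Lemma \ref{reslemma}(3) again one verifies $R(\upa a,H,G)$, and since $a\in\upa a$ the hypothesis $H\in\mathbf a\imp\mathbf b$ delivers $b\in G$, that is $a\imp b\in H$.

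The delicate points are precisely that $G$ is genuinely closed under the filter operations and that $R(\upa a,H,G)$ holds for this concrete $G$; both reduce, via Lemma \ref{reslemma} and (L3), to the order-theoretic behaviour of $\imp$ established earlier, so no new machinery is needed. Once these four preservation properties are in place, $\eta$ is the required embedding of $\alg A$ into $\alg D(\alg A)$.
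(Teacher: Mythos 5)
Your proposal is correct and takes essentially the same route as the paper: the same map $a \longmapsto \mathbf a$, injectivity tested against the principal filter $[a)$, and for the crucial inclusion $\mathbf a \imp \mathbf b \sse \mathbf{a \imp b}$ the same witnesses $F=[a)$ and $G=\{y : a \imp y \in H\}$ --- the paper's $G = \{d: \exists c \in H,\ c \le a \imp d\}$ coincides with yours since $H$ is upward closed, the only cosmetic difference being that the paper argues contrapositively from $a \imp b \notin H$ while you argue directly. Your explicit checks that $h$ preserves $\meet$ and $1$ fill in steps the paper leaves implicit, but they do not change the method.
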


\begin{proof}
Define a mapping $h:\alg A \longmapsto H(\alg A)$ by
$$
h(a) = \mathbf a.
$$
We start showing that for any $a,b \in A$, $H \in h(a\imp b)$ if and only if $H \in \mathbf a \imp \mathbf b$. This is equivalent to showing that, for $a,b \in A$
$$
a \imp b \in H\qquad\text{if and only if}\qquad\text{$\forall F,G$ if $R(F,H,G)$ and $a \in F$, then $b \in G$}.
$$
The left-to-right implication is a straightforward consequence of the definitions. Assume now that $a \imp b  \notin H$; we will show that there exists $F,G$ with $a \in F$, $R(F,H,G)$ but $b \notin G$.
Let's denote by $[a)$ the principal filter generated by $a$. Let $F=[a)$ and $G= \{d: \ \text{there is a}\ c \in H\ c\le a \imp d\}$; note that $b \notin G$ otherwise $c \le a\imp b$ for some $c \in H$ and since $H$ is a filter we would have $a\imp b \in H$, contrary to the hypothesis. Again we can show that $G$ is a filter using Lemma \ref{reslemma}; so $G \in \Gamma_\alg A$, $a \in F$ and $b \notin G$. Now we show that $R(F,H,G)$;  if $u \in F$ and
$u \imp v  \in H$, then $a \le u$ and hence $u \imp v\le a \imp v$,  which by definition implies $v \in G$ and thus  $R(F,H,G)$.

Now $a\le b$ implies  that $\mathbf a \sse \mathbf b$, so $h$ is order preserving; hence to conclude the proof it is enough to show that  $h$ is injective. But if $\mathbf a = \mathbf b$, since
$[a) \in \mathbf a$, we get $[a) \in \mathbf b$ so $b \in [a)$; by the same fashion $a \in [b)$ and hence $a =b$.
\end{proof}

\section{Algebraizing $\sf MALL$ and $\sf LL$}

For the complete $\sf MALL$ we have to work a little bit more.  An element $a$ of a commutative residuated lattice is {\bf involutive} if for all $b \in A$, $(b \imp a) \imp a = b$; it is well known
 \cite{BusanicheCignoli2009}  that if in case we define  $\nneg b := (b \imp a) \imp a$, then $\nneg$ is a  negation with the following properties
\begin{enumerate}
\ib $\nneg\nneg a = a$   (involutive);
\ib  $\nneg (a \join b) = \nneg a \meet \nneg b$ and $\nneg(a\meet b) = \nneg a \join \nneg b$  (De Morgan);
\ib  $a \le b$ implies $\nneg b \le \nneg a$  (antitonic);
\ib  $\nneg(a\cdot \nneg b) = a \imp b$  (contraposition).
\end{enumerate}
It is easily seen that these properties are not independent; for instance any negation that is involutive and satisfies one of the other three properties must satisfy them all.

A structure $\la A,\join,\meet, \imp,\cdot,0,1\ra$ is a {\bf Girard algebra} if
\begin{enumerate}
\ib $\la A, \join,\meet, \imp,\cdot,1\ra$ is a commutative residuated lattice;
\ib $0$ is an involutive element.
\end{enumerate}
In this case $\nneg x = x \imp 0$ is an involutive and antitonic so it also De Morgan and satisfie contraposition; so $\imp$ and $\cdot$ are definable in terms of each other.
Conversely any commutative residuated lattice with a negation $\nneg$ that is involutive and De Morgan can be seen as a Girard algebra upon defining
$0 = \nneg 1$. A {\bf bounded Girard algebra} is a Girard algebra with an additional constant $\top$ satisfying $x \imp \top \ge 1$; we define $\bot := \nneg \top$.  By the usual standard arguments we get:

\begin{theorem} The equivalent algebraic semantics of $\sf MALL$ is the variety of bounded Girard algebras.
\end{theorem}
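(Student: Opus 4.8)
The plan is to apply the Blok--Pigozzi machinery already assembled in Theorem~\ref{MALL} and then check that, after translation, the axioms of {\sf MALL} describe precisely the variety of bounded Girard algebras. Since {\sf MALL} is a fragment of {\sf LL} containing $\{\meet,\imp,\one\}$, Theorem~\ref{MALL} shows it is algebraizable with defining equation $p\meet\one=\one$ and congruence formulas $\D=\{p\imp q,\,q\imp p\}$. Hence its equivalent algebraic semantics $\vv K_{\sf MALL}$ is the quasivariety axiomatized by the equational translations $\alpha\meet\one=\one$ of (HL1)--(HL19), the quasiequational translations of the rules (MP) and (Adj), and the two algebraizability identities asserting that the translation $\tau$ and the congruence formulas $\D$ are mutually inverse. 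My task reduces to matching this presentation against the defining conditions of bounded Girard algebras.

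First I would dispose of the negation-free part. Under $\tau$, axioms (HL1)--(HL3) reproduce the $\mathsf{BCI}$-implication of the Girard-semilattice setting, (HL6)--(HL7) supply the residuated pair $(\cdot,\imp)$, (HL8)--(HL9) pin down $\one$ as the monoid unit, and (HL12)--(HL17) yield the lattice operations as the meet and join of the underlying order; combined, these are exactly the identities axiomatizing $\mathsf{CRL}$, as recorded earlier for the positive fragment. The structurally decisive point here --- and the reason $\vv K_{\sf MALL}$ is a genuine variety and not merely a quasivariety, in contrast with $\mathsf{GS}$ --- is that once the monoid product $\cdot$ is present the implication $\imp$ is a true residuation, so the order $a\le b$ is equationally definable and the quasiequations coming from (MP) and from antisymmetry collapse to identities. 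Thus the rule-translations become redundant over the equational base.

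It remains to install the negation and the bounds; here the defined connectives $\Par$, $\neg$ and $\bot$ must also be eliminated so that $\vv K_{\sf MALL}$ lives in the Girard-algebra signature. Reading the negation axioms (HL4), (HL5), (HL10), (HL11) algebraically, I would first derive that the logical negation is captured by $\nneg x=x\imp\zero$ with $\zero=\nneg\one$, then extract involutivity $\nneg\nneg x=x$ from (HL4) together with (HL10) and contraposition from (HL5); by the observation of \cite{BusanicheCignoli2009} quoted above, involutivity together with any one of the remaining properties yields them all, so $\zero$ is an involutive element and the reduct is a Girard algebra. Finally (HL18) translates to $x\imp\top\ge\one$, making $\top$ a top element, and one checks that $\bot:=\nneg\top$ is the matching bottom, so the translation of (HL19) holds automatically. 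The converse inclusion is immediate, since every bounded Girard algebra validates each translated axiom and each rule; hence the two classes coincide, and as bounded Girard algebras form an equational class, $\vv K_{\sf MALL}$ is that variety.

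I expect the main obstacle to be the bookkeeping that identifies the logical negation $\neg$ with the algebraic term $x\imp\zero$ and verifies that the involutivity package (HL4), (HL5), (HL10), (HL11) is equivalent, modulo $\mathsf{CRL}$, to the single condition that $\zero$ is involutive. A secondary but essential point is confirming that the ostensibly quasiequational translations of (MP) and (Adj) reduce to equations over $\mathsf{CRL}$, which is what upgrades the equivalent algebraic semantics from a quasivariety to the \emph{variety} of bounded Girard algebras.
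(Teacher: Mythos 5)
Your proposal is correct and follows essentially the same route as the paper, which dispatches this theorem with the phrase ``by the usual standard arguments'' after having set up exactly the ingredients you use: Theorem~\ref{MALL} for algebraizability, the identification of the positive fragment's semantics with $\mathsf{CRL}$ (where residuation makes the order equationally definable, collapsing the rule-translations and the antisymmetry quasiequation to identities), the involutive-negation package for $(HL4)$, $(HL5)$, $(HL10)$, $(HL11)$, and the bounds via $x\imp\top\ge\one$ and $\bot:=\nneg\top$. In effect you have written out the standard argument the paper leaves implicit, including the key point that upgrades the semantics from quasivariety to variety.
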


Note that there $0$ and $1$ can be in any ordering relation; in particular it may happen that $1 \le 0$, which implies that $\sf MALL$ is not an explosive logic or, alternatively, it is a paraconsistent logic. It is worth noting that explosivity in our case means that
$$
0 = 0 \cdot 1 = 0 \cdot \nneg 0 \le a
$$
for all $a$. This implies $0=\bot$ and hence $1= \top$; hence the minimal explosive extension of $\sf MALL$ has as equivalent algebraic semantics the variety of integral Girard algebras. Since in this case the negation is a orthocomplementation  it can be also seen as the variety of residuated ortholattices.

If we look at $\sf LL$ it is clear that adjoining the exponentials corresponds to considering Girard algebras superimposed with a certain S4 modality. A {\bf girale} is an algebra $\la A,\join,\meet,\imp,\cdot,\nneg,1, \escl\ra$
where
\begin{enumerate}
\ib $\la A,\join,\meet,\imp,\cdot,\nneg,1\ra$ is a Girard algebra;
\ib $\escl$ is unary and for all $a,b \in A$
\begin{align*}
&\escl 1 = 1 \tag{G1}\\
&\escl a \le a \meet 1 \tag{G2}\\
&\escl a  \escl b = \escl(a \meet b) \tag{G3}\\
&\escl\escl a = \escl a. \tag{G4}
\end{align*}
\end{enumerate}

Let's prove some algebraic properties of girales.

\begin{lemma}\label{girlemma} Let $\alg A$ be a girale; then for any $a,b,c \in A$
\begin{enumerate}
\item $a \le b$ implies $\escl a \le \escl b$;
\item $b \le \escl a \imp b$;
\item $\escl a = \escl a \escl a$;
\item $ab \le c$ implies $\escl a \escl b \le \escl c$;
\item $a \ge 1$ implies $\escl a =1$;
\item $\escl(\escl a \escl b) = \escl a \escl b \le \escl(ab)$;
\item $\escl(a \imp b) \le \escl a \imp \escl b$;
\item $\escl a \imp (\escl a \imp b) \le \escl a \imp b$.
\end{enumerate}
\end{lemma}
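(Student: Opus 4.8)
The plan is to prove the eight items of Lemma~\ref{girlemma} essentially in the order stated, since later items lean on earlier ones, and to lean throughout on the fact that $\alg A$ is a Girard algebra so that $(\cdot,\imp)$ are residuated, $\cdot$ is commutative and order-preserving in each argument, and $1$ is the monoid unit. The governing axioms are (G1)--(G4) together with the residuation law $ab\le c \iff a\le b\imp c$, and I would flag at the outset that $x\le y$ is synonymous with $x\imp y\ge 1$.

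First I would dispatch the monotonicity statement~(1): if $a\le b$ then $a\meet b=a$, so $\escl a\,\escl a=\escl(a\meet a)=\escl a$ by (G3) gives me~(3) immediately, and more usefully $\escl a=\escl(a\meet b)=\escl a\,\escl b$ by (G3); combining this with (G2), which gives $\escl b\le 1$, and the fact that $\cdot$ preserves order and $1$ is the unit, I get $\escl a=\escl a\,\escl b\le \escl a\,1=\escl a$, so I instead argue $\escl a=\escl a\,\escl b\le 1\cdot\escl b=\escl b$ using $\escl a\le 1$ from (G2). That settles~(1) and~(3). For~(2), note $\escl a\le 1$ by (G2), so $\escl a\cdot b\le 1\cdot b=b$, and residuation turns $\escl a\cdot b\le b$ into $b\le \escl a\imp b$. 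For~(5), if $a\ge 1$ then $1\le a$ gives $\escl 1\le\escl a$ by~(1), i.e.\ $1\le\escl a$ by (G1), while (G2) gives $\escl a\le 1$, so $\escl a=1$.

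Next come the items that mix the modality with the monoid. For~(4), from $ab\le c$ and $\escl a,\escl b\le 1$ (by (G2)) I first observe $\escl a\,\escl b=\escl(a\meet b)$ by (G3); since $a\meet b\le a$ and $a\meet b\le b$ I have $(a\meet b)(a\meet b)\le ab\le c$, and here I would use idempotence~(3) of $\escl(a\meet b)$ together with~(1): $\escl a\,\escl b=\escl(a\meet b)=\escl\bigl(\escl(a\meet b)\bigr)$ after applying (G4), and then push the inequality $(a\meet b)^2\le c$ through the modality. The cleanest route is probably: $\escl a\,\escl b=\escl(a\meet b)$ and by (G4) this equals $\escl\escl(a\meet b)$, and since $\escl(a\meet b)\le a\meet b\le$ (the relevant bound) I can apply~(1) to the inequality $\escl a\,\escl b\le ab\le c$ to get $\escl(\escl a\,\escl b)\le\escl c$; but $\escl(\escl a\,\escl b)=\escl\escl(a\meet b)=\escl(a\meet b)=\escl a\,\escl b$ by (G3) and (G4), which simultaneously proves~(6) and, composed with $\escl a\,\escl b\le c$ under~(1), yields~(4). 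Item~(6) is thus really the statement that $\escl a\,\escl b$ is $\escl$-fixed together with $\escl a\,\escl b\le\escl(ab)$, the latter from~(4) applied to $ab\le ab$.

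For~(7), residuation gives $(a\imp b)\,a\le b$, hence $\escl(a\imp b)\,\escl a\le\escl b$ by~(4); residuating again yields $\escl(a\imp b)\le\escl a\imp\escl b$. For~(8), I would start from~(3), $\escl a=\escl a\,\escl a$, so $\escl a\imp(\escl a\imp b)=(\escl a\,\escl a)\imp b=\escl a\imp b$ by the standard currying identity $x\imp(y\imp z)=(xy)\imp z$ valid in any commutative residuated lattice, giving equality and in particular the claimed inequality. The main obstacle I anticipate is bookkeeping in~(4) and~(6): one must be careful to apply (G3) and (G4) in the right order so that the modality lands on an already $\escl$-fixed element, since that is exactly what makes $\escl a\,\escl b\le\escl(ab)$ nontrivial rather than circular; every other item is a short residuation-and-monotonicity computation.
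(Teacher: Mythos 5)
Your proof is correct and follows essentially the same route as the paper's: (1)--(3) and (5) come from (G1)--(G4) plus monotonicity of the product, (4) and (6) hinge on the same key computation $\escl(\escl a\,\escl b)=\escl\escl(a\meet b)=\escl a\,\escl b$ combined with item (1), and (7)--(8) are the same residuation-and-idempotence arguments. The only deviations are cosmetic: you derive (7) directly from (4) where the paper routes through (6), and you prove (8) via the currying identity $x\imp(y\imp z)=(xy)\imp z$ instead of the paper's explicit computation $a(a\imp(a\imp b))=a^2(a\imp(a\imp b))\le b$.
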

\begin{proof} (1) is immediate from (G2) and (G3); next note that in any commutative residuated lattice $b \le (a \meet 1) \imp b$, thus (2) follows from (G2), while (3) is again a straightforward consequence
of (G3).

If $ab \le c$, then $\escl (a \meet b) = \escl a \escl b \le c$; hence (4) follows from (1) and (G4) while (5) follows from (1), (G1) and (G2).  For (6) we compute
$$
\escl (\escl a \escl b) = \escl \escl (a \meet b) = \escl(a \meet b) = \escl a \escl b.
$$
Moreover $\escl a\escl b \le ab$ so
by (1) $\escl(\escl a \escl b) \le \escl(ab)$ and hence $\escl a \escl b \le \escl(ab)$.
Next since $\escl a \le a$, we get $a \imp b \le \escl a \imp b$;
so $(a\imp b) \escl a \le b$, so $\escl (a \imp b) \escl a \le b$ and by (6) $\escl (a \imp b) \escl a \le \escl b$.  So (7) holds.

For (8) we observe that in any  residuated lattice, if $a$ is an idempotent element in a residuated lattice then for all $b$
$$
a(a \imp (a \imp b)) =a^2(a \imp (a \imp b)) \le b
$$
and by residuation
$$
a \imp (a \imp b) \le a \imp b.
$$
Since $\escl a$ is idempotent by (3) we conclude that
$$
\escl a \imp (\escl a \imp b) \le  (\escl a \imp b)
$$
so (8) follows.
\end{proof}

Now using Lemma \ref{girlemma} and the usual techniques of algebrization of logical systems it is straightforward to show that:

\begin{theorem} $\sf LL$ is strongly algebraizable and its equivalent algebraic semantics is the variety $\vv G$ of bounded girales.
\end{theorem}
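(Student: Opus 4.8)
The plan is to treat $\sf LL$ as the extension of $\sf MALL$ by the exponential axioms (HL20)--(HL24) and the rule (Nec), and $\vv G$ as the corresponding expansion of the variety of bounded Girard algebras by a unary operation $\escl$ subject to (G1)--(G4). Since the defining equation $p \meet \one = \one$ and the congruence formulas $\D = \{p \imp q,\, q \imp p\}$ are inherited unchanged from Theorem~\ref{MALL}, the reflexivity, symmetry and transitivity of $\D$, its compatibility with the multiplicative--additive connectives $\cdot, \join, \nneg$, and the bridge interderivability of $p$ with $\D(p \meet \one, \one)$ are already in hand. The only genuinely new inputs are (i) the compatibility of $\D$ with $\escl$ and (ii) the identification of the resulting equivalent algebraic semantics with $\vv G$.

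For (i) I would verify the congruence condition
\[
\{p \imp q,\ q \imp p\} \vdash_{\sf LL} \{\escl p \imp \escl q,\ \escl q \imp \escl p\}.
\]
From $p \imp q$ one applies (Nec) to obtain $\escl(p \imp q)$ and then (HL22) with (MP) to obtain $\escl p \imp \escl q$; the symmetric derivation from $q \imp p$ gives $\escl q \imp \escl p$. Together with the inherited congruence data, the Blok--Pigozzi criterion for Hilbert systems shows that $\sf LL$ is algebraizable with the stated $\tau$ and $\D$, its equivalent algebraic semantics being the quasivariety $\vv K$ axiomatized by the translations of the axioms (each of the form ``$\phi \ge \one$'', i.e.\ $\phi \meet \one \app \one$) together with the quasi-equations translating the rules.

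It then remains to prove $\vv K = \vv G$. Reading each exponential axiom relative to the defining equation gives, as pure equations, $\escl a \le \one$ and $\escl a \le a$ from (HL20) and (HL23), hence (G2); idempotency of $\escl a$ from (HL20) and (HL21); the functoriality $\escl(a \imp b) \le \escl a \imp \escl b$ from (HL22); and $\escl a \le \escl\escl a$ from (HL24), which with the converse $\escl\escl a \le \escl a$ supplied by (G2) yields (G4). Applying (Nec) to (HL8) gives $\escl\one \ge \one$, whence $\escl\one = \one$, i.e.\ (G1). For (G3) one uses the quasi-equation contributed by (Nec), namely ``$a \ge \one$ implies $\escl a \ge \one$'', to deduce monotonicity of $\escl$ from functoriality, and then the standard weakening-plus-contraction argument delivers $\escl a\, \escl b = \escl(a \meet b)$. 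Thus every algebra of $\vv K$ is a bounded girale. The reverse inclusion $\vv G \sse \vv K$ is exactly the content of Lemma~\ref{girlemma}, which shows that every bounded girale satisfies all the translated axioms, and that the $\op{(Nec)}$ quasi-equation holds by part~(5).

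Finally, the defining conditions of $\vv G$---the commutative residuated lattice axioms, involutivity of $0$, the bound $x \imp \top \ge \one$, and (G1)--(G4)---are all \emph{equations}, so $\vv G$ is a variety; since $\vv K = \vv G$ is therefore a variety, $\sf LL$ is strongly algebraizable. I expect the main obstacle to lie in step~(ii): one must check that the lone quasi-equation contributed by (Nec) is equationally redundant over $\vv G$, so that the equivalent algebraic semantics does not merely form a proper quasivariety. This redundancy is precisely what Lemma~\ref{girlemma}(5) guarantees, and it is also exactly the ingredient needed to derive the multiplicative law (G3) from the translated axioms; confirming that no other axiom or rule forces a genuine quasi-equation is the crux of upgrading algebraizability to \emph{strong} algebraizability.
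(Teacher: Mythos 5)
Your proposal is correct and is essentially the paper's own argument, since the paper proves this theorem with a one-line appeal to Lemma \ref{girlemma} and ``the usual techniques of algebrization'': like the paper, you inherit the defining equation and $\D$ from Theorem \ref{MALL}, check $\D$-compatibility with $\escl$ via (Nec) plus (HL22), use Lemma \ref{girlemma} (with item (5) supplying the (Nec) quasi-equation) for the inclusion of bounded girales into the equivalent quasivariety, and recover (G1)--(G4) from the translated axioms for the converse, concluding strong algebraizability because $\vv G$ is equationally defined. One phrase is loose --- for the inequality $\escl a\,\escl b \le \escl(a \meet b)$ in (G3), monotonicity plus the weakening--contraction consequences do not suffice, and one needs a promotion step, i.e.\ a further application of the (Nec) quasi-equation to the equationally derivable premise $\escl a \imp (\escl b \imp (a \meet b)) \ge 1$ followed by (HL22) and (HL24) --- but your closing observation that (Nec) is exactly the ingredient needed to derive (G3) from the translated axioms shows you have this step in hand.
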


So in a way girales have the same relationship to $\sf MALL$  that interior algebras have to classical logic.
More generally they belong to the very general class of (residuated) lattices with a superimposed modality.

\section{Another embedding theorem}

We will show that $\sf LL$ is a conservative extension of $\sf MALL$. Of course we will do it from the algebraic side, i.e. we will prove that the class of subreducts of girales to the type of Girard algebras is the variety of Girard algebras.  This is equivalent to showing that any Girard algebra is embeddable in a girale; in order to do so we will collect several information, that will be useful for other investigations as well, on the algebraic structures of girales.

Let $\alg P$ be any poset; we say that $Q$ is a {\bf relatively complete} subset of $P$ if for all $p \in P$
$$
\sup\{q \in Q: q \le p\}\qquad \inf\{q \in Q: q \le p\}
$$
both exist.

Let $\alg A$ be a Girard algebra and let as usual $A^- = \{ a : a \le 1\}$;  a {\bf relatively complete Heyting} subset of $A$ is a subset $H \sse A^-$ with the following properties:
\begin{enumerate}
\item  $1 \in H$;
\item $H$ is a relatively complete subset of $A$;
\item $H$ is closed under multiplication;
\item for all $a \in H$, $a^2 = a$.
\end{enumerate}

\begin{lemma}\label{relheyting} Let $\alg A$ be a Girard algebra and let $H$ be a relatively complete Heyting subset of $A$. If we define $\escl_{\small H} a = \sup\{b \in H: b \le a\}$, then
$\la \alg A, \escl_{\small H}\ra$ is a girale. Conversely if $\alg A$ is a girale then  $H = \escl A = \{\escl a: a \in A\}$ is a relatively complete Heyting subset of
$\alg A$ and $\escl_{\small H} a = \escl a$.
\end{lemma}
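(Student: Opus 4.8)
The plan is to read the lemma as saying that the exponential of a girale is completely determined by its image $H=\escl A$, and that $H$ is characterised abstractly as a set of idempotents below $1$ rich enough to admit a largest member below every element. Throughout I write $\escl_{\small H}a=\sup\{b\in H:b\le a\}$, which exists by the relative completeness of $H$. The two directions then split into (i) checking that this supremum satisfies (G1)--(G4), and (ii) checking that when $H$ is the image of a given $\escl$ the recipe returns $\escl$ and $H$ meets the four defining conditions; in both cases the only genuine computation is the one attached to (G3).

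For the forward direction I would first record the two easy facts that $\escl_{\small H}a\le a$ (it is a supremum of elements below $a$) and $\escl_{\small H}a\le 1$ (every member of $H$ lies in $A^-$), which together give (G2). Axiom (G1) is immediate since $1\in H$ is the largest element of $H$ below $1$. For (G4), the inequality $\escl_{\small H}a\le a$ forces the equality of sets $\{w\in H:w\le\escl_{\small H}a\}=\{w\in H:w\le a\}$, whence the two suprema agree and $\escl_{\small H}\escl_{\small H}a=\escl_{\small H}a$ with no further work.

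The crux, and the step I expect to be the main obstacle, is (G3): $\escl_{\small H}a\cdot\escl_{\small H}b=\escl_{\small H}(a\meet b)$. Here I would exploit that, $\cdot$ being residuated, it preserves all existing suprema in each argument, so
\[
\escl_{\small H}a\cdot\escl_{\small H}b=\sup\{uv:u,v\in H,\ u\le a,\ v\le b\}.
\]
For $\le$: each such $uv$ lies in $H$ (closure under multiplication) and satisfies $uv\le u\meet v\le a\meet b$ (using $u,v\le 1$), so it is among the elements whose supremum is $\escl_{\small H}(a\meet b)$. For $\ge$: any $c\in H$ with $c\le a\meet b$ satisfies $c\le\escl_{\small H}a$ and $c\le\escl_{\small H}b$, and idempotency gives $c=c\cdot c\le\escl_{\small H}a\cdot\escl_{\small H}b$; taking the supremum over all such $c$ yields the reverse inequality. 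It is precisely the interplay of idempotency, closure under $\cdot$, and join-preservation of $\cdot$ that makes this identity work, and securing both inclusions cleanly is the delicate point.

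For the converse I would take $H=\escl A$ and read off the four conditions from Lemma~\ref{girlemma} and (G1)--(G4): $H\sse A^-$ by (G2); $1=\escl 1\in H$ by (G1); $\escl a\cdot\escl b=\escl(a\meet b)\in H$ by (G3) gives closure under multiplication; and $\escl a\cdot\escl a=\escl a$ (Lemma~\ref{girlemma}(3)) gives idempotency. Relative completeness together with the final identity $\escl_{\small H}a=\escl a$ then rests on a single observation: $\escl a\in H$ and $\escl a\le a$, while for any $b\in H$ with $b\le a$ we have $b=\escl b$ by (G4) and $\escl b\le\escl a$ by monotonicity (Lemma~\ref{girlemma}(1)), so $\escl a$ is in fact the greatest element of $H$ below $a$, hence the supremum defining $\escl_{\small H}a$. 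This simultaneously exhibits the supremum demanded by relative completeness and shows that the construction inverts $\escl$, with the accompanying infimum clause following by the analogous computation.
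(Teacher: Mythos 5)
Your proposal is correct and follows the same skeleton as the paper's proof --- verify (G1)--(G4) for $\escl_H$, then read the converse off Lemma \ref{girlemma} and show that $\escl a$ is the greatest element of $H$ below $a$ (your upper-bound/least-upper-bound argument for $\escl a=\sup\{b\in H: b\le a\}$ is the paper's, verbatim in substance) --- but your handling of the crux (G3) is genuinely different and in fact more careful. The paper derives $\escl_H(a\meet b)\le \escl_H a\cdot\escl_H b$ from the claim $\escl_H a\cdot\escl_H a=\escl_H a$, and the reverse inequality by squaring $\escl_H a\escl_H b$; both steps use idempotency of the elements $\escl_H x$ themselves, which tacitly assumes these suprema behave like members of $H$ --- relative completeness only guarantees they exist in $A$, and the paper never justifies the idempotency it calls ``clear''. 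You instead push everything down to $H$: since $\cdot$ is residuated it preserves existing joins in each argument, so $\escl_H a\cdot\escl_H b=\sup\{uv: u,v\in H,\ u\le a,\ v\le b\}$, and then closure of $H$ under multiplication together with idempotency of elements of $H$ (given by clause (4) of the definition) yields both inequalities without ever needing $\escl_H x\in H$. In effect your join-continuity computation is exactly the missing lemma that would legitimize the paper's shortcut, so your route buys rigor at the modest cost of one extra standard fact. One caveat: in the converse you assert that the infimum half of relative completeness follows ``by the analogous computation''; that is not substantiated (there is no evident candidate for the required infimum in an unbounded girale), but the paper's own proof silently verifies only the supremum half, so this is a lacuna you share with the source rather than a gap relative to it.
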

\begin{proof} We have to check that $\escl_H$ satisfies (G1)-(G4);  (G1) is obvious since $H \subseteq A^-$ and $1 \in H$ and (G2) follows from the definition of $\escl_H$.
Since clearly $a \le b$ implies $\escl_H a \le \escl_H b$ and $\escl_H a \escl_H a = \escl_H a$, from $\escl_H(a \meet b) \le \escl_H a, \escl_H b$ we get $\escl_H(a\meet b) \le \escl_H a \escl_H b$.
For the converse, note that $\escl_H a \le a \meet 1$ and $\escl_H b \le b \meet 1$; since $H$ is closed under multiplication we get
$$
\escl_H a \escl_H b = (\escl_H a \escl_H b)(\escl_H a \escl_H b) \le (a \meet 1)(b \meet 1) \le a \meet b \meet 1.
$$
This proves that $\escl_H a \escl_H b \le \escl_H (a \meet b)$ and hence (G3). Finally (G4) is obvious from the definition of $\escl_H$.

Since $1 \in \escl A$ by (G1), $\escl A\sse A^-$ by (G2),  it is closed under products by (G3) and consists of idempotents by Lemma \ref{girlemma}(3),  we need only to show that
$$
\escl a =\sup\{\escl b: \escl b \le a\}.
$$
Now if $b \le \escl a$, then $\escl b \le \escl a$ by Lemma \ref{girlemma}(1); so $\escl a$ is an upper bound.  Let $\escl b \le c$ for all $\escl b \in \escl A$ such that $\escl b \le a$; since $\escl a \le a$ we get that
$\escl a \le c$, so $\escl a$ is the least upper bound.
\end{proof}

\begin{corollary} Every complete Girard algebra is embeddable in (as a matter of fact, it is a reduct of) a girale.
\end{corollary}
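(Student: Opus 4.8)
The plan is to deduce the corollary directly from Lemma~\ref{relheyting}. That lemma converts a relatively complete Heyting subset $H \sse A^-$ of a Girard algebra $\alg A$ into a girale $\la \alg A, \escl_H\ra$ whose Girard-algebra reduct is exactly $\alg A$. So it suffices to produce, inside any \emph{complete} Girard algebra, one relatively complete Heyting subset; the identity map then witnesses the embedding, and in fact exhibits $\alg A$ as a reduct.

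The candidate I would take is the set of idempotents of the negative cone,
$$
H = \{a \in A : a \le 1 \ \text{and}\ a^2 = a\}.
$$
First I would verify the three purely algebraic clauses in the definition of a relatively complete Heyting subset. That $1 \in H$ is clear. For closure under multiplication, note that $a,b \le 1$ forces $ab \le 1$, while commutativity and idempotence give $(ab)^2 = a^2 b^2 = ab$, so $ab \in H$. That every element of $H$ is idempotent holds by the very definition of $H$. All of this is routine and uses only the commutative residuated lattice structure.

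The one clause that genuinely invokes the hypothesis is relative completeness: for each $p \in A$ the two quantities $\sup\{b \in H : b \le p\}$ and $\inf\{b \in H : b \le p\}$ must \emph{exist} in $A$ (the definition does not ask that they lie in $H$). Here I would observe that because $\alg A$ is complete, every subset of $A$ has a supremum and an infimum, with the usual conventions for the empty set, so both expressions exist automatically. Thus completeness is used precisely, and only, to make $H$ a relatively complete subset.

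With $H$ recognised as a relatively complete Heyting subset, Lemma~\ref{relheyting} immediately yields that $\escl_H a = \sup\{b \in H : b \le a\}$ turns $\la \alg A, \escl_H\ra$ into a girale whose reduct to the Girard-algebra signature is $\alg A$ itself. I do not expect a real obstacle: the whole argument reduces to choosing the right $H$ and noting that lattice-completeness hands us relative completeness for free. The only point deserving explicit emphasis is that the relative-completeness condition asks merely for existence of the relevant suprema and infima, which is exactly what the completeness assumption supplies.
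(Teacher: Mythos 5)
Your proposal is correct and coincides with the paper's own argument: the paper likewise takes $H=\{a: a\le 1\ \text{and}\ a^2=a\}$, notes that completeness makes it a (nonempty) relatively complete Heyting subset, and applies Lemma~\ref{relheyting}. Your write-up merely spells out the routine verifications (closure under products, idempotence, existence of the suprema and infima) that the paper leaves implicit.
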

\begin{proof} In this case the set $H= \{a: a\le 1\ \text{and}\  a^2=a\}$ is a nonempty relatively complete Heyting subset of $A$ and Lemma \ref{relheyting} applies.
\end{proof}

So to prove that Girard algebras are exactly subreducts of girales it is enough to show that any Girard algebra can be embedded in a complete Girard algebra. Now we could be tempted to use the same embedding
we used for Girard lattices; as a matter of fact it is not hard to show that if $\alg A$ is a commutative residuated lattice then $\alg A$ is embeddable in the complete and commutative residuated lattice $D(\alg A)$
(see \cite{Agliano1996b} for details). However introducing an involutive De Morgan negation causes problems; these problems can of course be solved by constructing a different embedding using for instance the circle
of ideas in \cite{AllweinDunn1993}, but we have a different and more direct embedding that does the job and we proceed to illustrate it.

Let $\alg A$ be a Girard algebra; we define a binary relation $R$ on $A$ by $(a,b) \in R$ if and only if $\nneg b \not\ge a$; this relation is symmetric since $\nneg$ is De Morgan. As for all binary relations
there is a closure operator $Q$ naturally associated to it; if $U \sse A$ we can define $Q(U) = \{a: (u,a) \in R\ \text{for some}\ u \in U\}$. It is a standard exercise to prove that $Q$ is a closure
operator on $A$ and hence the closed sets form a complete lattice with universe $C(\alg A)$.

\begin{lemma} Let $\alg A$ be a girard algebra; then $\alg C(\alg A) = \la C(\alg A),\join,\meet, \imp, \cdot, \nneg, \mathbf 1\ra$ is a complete Girard algebra upon defining for $X,Y \in C(\alg A)$
\begin{align*}
&X \cdot Y = Q(\{ab: a \in X, b \in Y\}) \\
&\nneg X = \{b: \nneg c \not\ge b\ \text{implies}\ c \ge a\ \text{for all}\  a \in X\}\\
&X \imp Y = \nneg(X \cdot \nneg Y)\\
&\one = Q(1).
\end{align*}
\end{lemma}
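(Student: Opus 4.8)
The plan is to read $\alg C(\alg A)$ as a phase-space construction over the commutative monoid reduct $\la A,\cdot,1\ra$ with dualizing element $0=\nneg 1$, and to verify the Girard-algebra axioms one reduct at a time. Throughout I abbreviate the orthogonality given by the complement of $R$, writing $X^\perp=\{b:(a,b)\notin R\text{ for all }a\in X\}$; since $(a,b)\notin R$ means $\nneg b\ge a$, i.e.\ $ab\le 0$, this is the usual polar, and $Q(U)=U^{\perp\perp}$, the closed sets being exactly the fixpoints of $Q$. The lattice reduct and completeness then come for free: as $Q$ is a closure operator, $C(\alg A)$ is a complete lattice under inclusion, with $X\meet Y=X\cap Y$ (intersections of closed sets are closed) and $X\join Y=Q(X\cup Y)$, and arbitrary meets and joins given by $\bigcap$ and $Q(\bigcup\cdots)$.

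Next I would treat the commutative monoid $\la C(\alg A),\cdot,\one\ra$. The product lands in $C(\alg A)$ by the outer $Q$ in its definition, and commutativity is inherited from $\cdot$ on $A$. The heart of the matter is to show that $Q$ is a \emph{nucleus} for the multiplication:
\[
Q(U)\,V\sse Q(UV)\quad\text{for pointwise products }UV=\{uv:u\in U,\,v\in V\},\ \text{equivalently }Q\bigl(Q(U)\,Q(V)\bigr)=Q(UV).
\]
I would prove this straight from the definition of $\perp$: for $a\in U^{\perp\perp}$, $b\in V$ and any $c\in(UV)^\perp$, one checks that $bc\in U^\perp$, whence $a(bc)\le 0$, so $ab$ is orthogonal to every such $c$ and thus $ab\in(UV)^{\perp\perp}=Q(UV)$. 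Granting the nucleus property, associativity of $\cdot$ and the unit law $\one\cdot X=X$ (with $\one=Q(1)$) follow formally from the corresponding identities in $\la A,\cdot,1\ra$.

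It then remains to handle implication and negation. I would first identify the primitive $\nneg$ with the polar, $\nneg X=X^\perp$; antitonicity of $(-)^\perp$ and $X^{\perp\perp}=X$ for closed $X$ are then immediate, so $\nneg$ is involutive on $C(\alg A)$. For residuation I would rewrite $X\imp Z=\nneg(X\cdot\nneg Z)=(X\cdot Z^\perp)^\perp$ and, using $Z=Z^{\perp\perp}$, show this equals $\{m:mX\sse Z\}$; the equivalence $X\cdot Y\sse Z\iff Y\sse X\imp Z$ is then routine, so $(\cdot,\imp)$ is a residuated pair and $\la C(\alg A),\join,\meet,\imp,\cdot,\one\ra$ is a complete commutative residuated lattice. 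Finally, setting $\zero=\nneg\one$, involutivity of $\nneg$ yields $(X\imp\zero)\imp\zero=\nneg\nneg X=X$, so $\zero$ is an involutive element; hence $\alg C(\alg A)$ is a Girard algebra, and it is complete because its lattice reduct is.

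I expect the nucleus property to be the main obstacle: it is the single place where the monoid operation and the orthogonality manufactured from $0$ must be shown compatible, and associativity, the unit law, the residuation identity, and ultimately the involutivity step $X^{\perp\perp}=X$ all rest on it. A secondary point to dispatch before the residuation argument goes through is the verification that the defining formula for $\nneg X$ genuinely computes the polar $X^\perp$.
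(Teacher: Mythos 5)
Your proposal is correct, and it supplies considerably more than the paper does: the paper's entire proof is the remark that, by the earlier observation that a commutative residuated lattice with an involutive De Morgan negation is a Girard algebra, it suffices to check that the defined $\nneg$ is involutive and satisfies contraposition, the rest being ``a simple exercise'' left to the reader. That reduction tacitly presupposes that $\la C(\alg A),\join,\meet,\cdot,\imp,\one\ra$ is already a complete commutative residuated lattice, i.e.\ exactly the monoid laws and residuation that your argument establishes. Your organization is genuinely different and, in my view, the right completion of the sketch: you read $(a,b)\notin R$ as $ab\le 0$, identify $Q$ with the double polar ${}^{\perp\perp}$ and $\nneg$ with ${}^{\perp}$, and isolate the nucleus property $Q(U)V\sse Q(UV)$ as the single point where the monoid operation and the orthogonality interact. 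Your verification of that property is sound (for $c\in(UV)^\perp$, $b\in V$ and $u\in U$ one has $u(bc)=(ub)c\le 0$, so $bc\in U^\perp$, whence $(ab)c=a(bc)\le 0$ for every $a\in U^{\perp\perp}$), and from it the monoid laws, the computation $X\imp Z=(XZ^\perp)^\perp=\{m:mX\sse Z\}$, the residuation equivalence, and the involutivity of $\zero=\nneg\one$ (via $\nneg\zero=\one$ and the unit law, so that $X\imp\zero=\nneg X$) follow as you indicate, with completeness automatic from the closure system. What your route buys is a modular, fully checkable proof; what the paper's buys is brevity, at the cost of leaving the substantive content unproved.

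One caveat: your identifications silently repair the paper's printed definitions, and this repair is necessary, not optional. As literally displayed, $Q(U)=\{a:(u,a)\in R\ \text{for some}\ u\in U\}$ is not even extensive --- in the two-element Girard algebra one has $\nneg a\ge 0$ for every $a$, so $Q(\{0\})=\emptyset$ --- and the printed clause for $\nneg X$, whose consequent reads ``$c\ge a$ for all $a\in X$'', fails involutivity there: it yields $\nneg\{0\}=\nneg A=A$, hence $\nneg\nneg\{0\}\ne\{0\}$. The intended operations are precisely your polars: the consequent should be ``$c\notin X$'', which makes $\nneg X=X^\perp=\{b:\nneg b\ge a\ \text{for all}\ a\in X\}$ and $Q=\,{}^{\perp\perp}$ (indeed the Dedekind--MacNeille-type closure $X\mapsto$ lower bounds of upper bounds), exactly as the paper itself uses them in the next lemma when computing $Q(a)=(a]$ and $\nneg(a]=(\nneg a]$. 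So the ``secondary point'' you flagged --- verifying that the printed formula for $\nneg X$ computes the polar --- would in fact fail verbatim and should be recorded as the correction of a typo; with that emendation your proof is complete.
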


By our previous discussion to prove the lemma it is enough to show that the negation defined above is involutive and satisfies contraposition; this is a simple exercise, using the analogous properties of the negation in Girard algebras, and we leave it to the reader.
The next lemma is more important.

\begin{lemma} Let $\alg A$ be a Girard algebra and for $a \in A$ let's denote by $(a]$ the principal ideal generated by $a$. Then
\begin{enumerate}
\item for any $a \in A$, $(a]= Q(a) \in C(\alg A)$;
\item the mapping $a \longmapsto (a]$ is an embedding of $\alg A$ in $\alg C(\alg A)$.
\end{enumerate}
\end{lemma}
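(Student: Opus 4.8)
The plan is to make everything rest on the fact that the closure operator $Q$ attached to the symmetric relation $R$ is the double-polar operator. Writing $X^\perp = \{b : b \le \nneg a \text{ for all } a \in X\}$ (equivalently $\{b : (a,b)\notin R \text{ for all } a \in X\}$), one has $Q(X) = X^{\perp\perp}$ and the members of $C(\alg A)$ are exactly the polars. Two structural features of polars will carry most of the argument. First, every polar is a down-set: if $b \le \nneg a$ and $b' \le b$ then $b' \le \nneg a$, so each $X^\perp$, and hence each closed set, is an order ideal. Second, $\nneg$ is involutive and antitone, so I can compute polars of singletons explicitly. For part (1) I would first note $\{a\}^\perp = (\nneg a]$, since the binding constraint among the $\nneg x$ with $x \le a$ is attained at $x = a$; then $\bigl((\nneg a]\bigr)^\perp = \{z : z \le \nneg y \text{ for all } y \le \nneg a\} = (\nneg\nneg a] = (a]$, using involutivity. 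Thus $Q(a) = \{a\}^{\perp\perp} = (a]$, which is closed because it is a double polar.

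For part (2), injectivity is immediate, since $(a] = (b]$ forces $a \le b$ and $b \le a$, hence $a = b$. Preservation of operations splits into a routine group and a substantive group. In the complete lattice $C(\alg A)$ the meet is intersection and the unit is $\one = (1]$, so $(a \meet b] = (a] \cap (b]$ and $1 \mapsto (1] = \one$ hold at once; and since the negation of $\alg C(\alg A)$ is the polar operation, the computation $(a]^\perp = (\nneg a]$ from part (1) yields $\nneg(a] = (\nneg a]$. Because $\imp$ and the constant $0$ are term-definable from $\cdot, \nneg, 1$ in any Girard algebra (namely $a \imp b = \nneg(a \cdot \nneg b)$ and $0 = \nneg 1$), their preservation will follow automatically once $\cdot$ and $\nneg$ are settled.

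The two cases that require real work are join and product, both computed through the closure. For join, $(a] \vee (b] = Q\bigl((a] \cup (b]\bigr) = \bigl((a] \cup (b]\bigr)^{\perp\perp}$; applying $(X \cup Y)^\perp = X^\perp \cap Y^\perp$, part (1), and De Morgan $\nneg a \meet \nneg b = \nneg(a \join b)$ gives $\bigl((a] \cup (b]\bigr)^\perp = (\nneg a] \cap (\nneg b] = (\nneg(a \join b)]$, and one further polar returns $(a \join b]$. For product, $(a] \cdot (b] = Q(\{xy : x \le a,\ y \le b\})$: every generator satisfies $xy \le ab$, so the generating set lies in the closed set $(ab]$ and hence so does its closure; conversely $ab$ is itself a generator, so $ab \in (a] \cdot (b]$, and since this product is closed it is a down-set and therefore contains all of $(ab]$. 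I expect the product case to be the main obstacle, as it is precisely where one must combine the extensivity of $Q$ with the fact that closed sets are down-sets, and where the lattice-theoretic product $Q(\{xy\})$ must be reconciled with the algebraic $(ab]$; the join case is formally similar but cleaner, since the De Morgan law lets the polar computation do all the work. Everything else reduces to the two facts about polars established at the start, namely down-closure and $\{a\}^{\perp\perp} = (a]$.
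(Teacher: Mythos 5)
Your proposal is correct, and it takes the same underlying route as the paper: both arguments rest on the Galois connection induced by the symmetric relation $R$ and verify each operation on principal ideals. The difference is one of packaging, and yours is tighter in one genuinely useful place. The paper proves part (1), the join case and the negation case by direct element chases with the explicit description of the closed sets --- these chases are precisely your polar computations unfolded --- declares the meet case obvious, derives $\imp$ from $\cdot$ and $\nneg$ exactly as you do, and disposes of the product case with ``a similar argument shows.'' Your product argument is where you improve on the text: rather than computing the polar of the generator set $\{xy : x \le a,\ y \le b\}$, you sandwich its closure between two structural observations --- the generators lie in the closed set $(ab]$, so the closure does too, while extensivity of $Q$ puts $ab$ in the product and the down-closure of closed sets then absorbs all of $(ab]$. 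The two facts you isolate at the outset (every polar is a down-set; $\{a\}^{\perp\perp} = (a]$) reduce each remaining verification to a line and would equally streamline the paper's join computation. One reading note: the paper's displayed definition of $Q(U)$ is garbled (as literally written it is not even extensive), but its subsequent description of $Q(a)$ and of $\nneg X$ confirms that the intended operator is the double polar, exactly as you assumed, so your identification $Q(X) = X^{\perp\perp}$ is faithful to what the paper actually uses.
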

\begin{proof} Observe that
$$
 Q(a) = \{b: c \not\ge b  \ \text{implies}\ c\not\ge a\}.
$$
Suppose that $b \notin (a]$; then  $b \not \le a$ and then $b \notin Q(a)$. Conversely if $b \notin Q(a)$,
then there exists a $c$ with $\nneg c \not\ge b$,
and $\nneg c \ge a$; hence $b \not\le a$ and  so
$b \notin (a]$. This proves (1).

For (2) it is obvious that the mapping is a meet homomorphism.
Let's  show that
$$
(a] \join (b] = (a \join b]
$$
 Observe that $(a] \join (b] = Q((a] \cup (b])$. Since
$(a] \cup (b] \sse (a \join b]$ and the latter is closed, one inclusion is
clear. Next observe that
$$
(a]\join (b] = \{c: \nneg d \not\ge c \ \text{implies}\ (\nneg d\not\ge e\ \text{for
some}\ e \le a\ \ \text{or}\ \ \nneg d\not\ge f\ \text{for some}\ f \le b)\}.
$$
Let  $c \notin (a] \join (b]$. Then there exists a $d$
with $\nneg d \not\ge c$ but $\nneg d \ge a$ and $\nneg d \ge b$. Hence
$\nneg d \ge a \join b$ and so, since $\nneg d \not\ge c$,  $c \notin (a \join b]$.

A similar argument shows that
$$
(a]\cdot (b] = Q(\{uv:  u\le a, \ v \le b\}) = (ab].
$$
Next we check that
$$
(\nneg a] =\nneg(a] = \{ b: c \not \ge b \ \text{implies}\ c \not\le a\}.
$$
Suppose that $b \notin \nneg(a]$. Then there
exists a $c$ with $\nneg c \not\ge b$ and $c \le a$. Thus $b \not\le \nneg a$,
otherwise
$b \le \nneg a \le \nneg c$.
Conversely if  $b\not\le \nneg a$, then $\nneg a \not\ge b$ and thus
 $b \notin \nneg(a]$.

Since $\imp$ is definable in both cases by the negation the mapping is a homomorphism and it is obviously injective.
\end{proof}

\begin{corollary} Every (bounded) Girard algebra is embeddable in a (bounded) girale; hence the variety of (bounded) Girard algebras is exactly the class of subreducts
of (bounded) girales. Therefore $\sf LL$ is a conservative extension of $\sf MALL$.
\end{corollary}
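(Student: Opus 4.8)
The plan is to obtain the embedding by composing the two constructions already in hand and then to read off the logical statement through the standard algebraization dictionary. Given a (bounded) Girard algebra $\alg A$, the preceding two lemmas supply a complete Girard algebra $\alg C(\alg A)$ together with an embedding $a \longmapsto (a]$ of $\alg A$ into $\alg C(\alg A)$, while the corollary on complete Girard algebras exhibits $\alg C(\alg A)$ as the Girard-algebra reduct of a girale, namely the one induced via Lemma~\ref{relheyting} by the relatively complete Heyting subset $H = \{X \in C(\alg A): X \le \one\ \text{and}\ X\cdot X = X\}$. Composing the two, $\alg A$ is embedded into the Girard-algebra reduct of a girale, which is exactly what the first assertion requires.

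For the bounded case I would only need to check that the extra constant is respected. Since $\top$ is the greatest element of $\alg A$, its image $(\top]=A$ is the largest closed set and hence the top of the complete lattice $C(\alg A)$; as the girale structure on $\alg C(\alg A)$ keeps the underlying Girard algebra unchanged, $(\top]$ is the distinguished top of the bounded girale and $(\bot]=\nneg(\top]$ is its bottom. Thus $a\longmapsto (a]$ is a bounded-Girard-algebra embedding into the bounded girale, and the completeness of $C(\alg A)$ guarantees that $H$ is automatically relatively complete, so that no further hypothesis on $\alg A$ is needed.

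With the embedding established, the equality of classes follows by a two-way inclusion. Any subreduct of a (bounded) girale to the Girard-algebra type is again a (bounded) Girard algebra, because by definition the reduct of a girale to this signature is a Girard algebra and the (bounded) Girard algebras are closed under subalgebras; hence the class of subreducts is contained in the variety. The reverse inclusion is precisely the embedding just constructed, which displays every (bounded) Girard algebra as a subreduct of a (bounded) girale. Combining the two inclusions yields that the class of Girard-type subreducts of (bounded) girales is exactly the variety of (bounded) Girard algebras.

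Finally I would transfer this to the logical statement through algebraization. Since $\sf MALL$ and $\sf LL$ are (strongly) algebraizable with equivalent algebraic semantics the varieties of bounded Girard algebras and bounded girales respectively, a rule in the language of $\sf MALL$ is derivable in $\sf MALL$ iff it is valid in all bounded Girard algebras, and derivable in $\sf LL$ iff its translation is valid in all bounded girales, that is, in all Girard-type subreducts of bounded girales. As these two classes of algebras were just shown to coincide, the two consequence relations agree on the $\sf MALL$ language, which is conservativity. I expect the genuine work to lie not in this corollary itself---the substantive constructions were carried out in the preceding lemmas establishing that $\alg C(\alg A)$ is a complete Girard algebra into which $\alg A$ embeds---but in the bookkeeping that converts the algebraic embedding into the statement about consequence relations, where the only point deserving care is the correct invocation of the algebraizability of the two systems.
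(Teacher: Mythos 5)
Your proof is correct and follows exactly the route the paper intends for this corollary (whose proof it leaves implicit): compose the embedding $a \longmapsto (a]$ of $\alg A$ into the complete Girard algebra $\alg C(\alg A)$ with the preceding corollary that every complete Girard algebra is a reduct of a girale via $H=\{X : X\le \one\ \text{and}\ X\cdot X = X\}$, then transfer conservativity through the algebraizability of $\sf MALL$ and $\sf LL$. Your explicit verification in the bounded case that $(\top]=A$ is the top of $C(\alg A)$ and is preserved by the embedding is a detail the paper glosses over, and you handle it correctly.
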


\section{Congruences}

Congruences in commutative residuated lattices are well known; since the variety $\sf CRL$ of commutative residuated lattices is ideal-determined in the sense of \cite{AglianoUrsini1992} the congruences ar completely determined by certain subsets of the universe that we call {\bf U-ideals} (Ursini ideals). In case of $\sf CRL$ these subsets have a particularly transparent description; if $\alg A \in \vv{CRL}$ a {\bf filter} of $\alg A$ is a subset $F \sse A$ such that
\begin{enumerate}
\ib $F$ is a lattice filter;
\ib $1 \in F$;
\ib if $a, a \imp b \in F$ , then $b \in F$.
\end{enumerate}
Since the intersection of any family of filters is clearly a filter, there is a closure operator on $A$ in which the closed sets are exactly the filters; the operator is easily shown to be algebraic,  so the filters of $\alg A$ form an algebraic lattice $\op{Fil}(\alg A)$. The following fact was observed first in \cite{Agliano1996b} and has been rediscovered many times since.

\begin{theorem}\label{confil} If $\alg A \in \vv{CRL}$ then $\Con A$ and $\op{Fil}(\alg A)$ are isomorphic through the mappings $\th \longmapsto 1/\th$ and
$F \longmapsto \th_F = \{(a,b): a \imp b, b \imp a \in F\}$.
\end{theorem}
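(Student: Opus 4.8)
The goal is to show that the two assignments $\th \mapsto 1/\th$ and $F \mapsto \th_F$ are mutually inverse, inclusion-preserving bijections, hence lattice isomorphisms between $\Con A$ and $\op{Fil}(\alg A)$. Since the paper has already recorded that $\sf{CRL}$ is ideal-determined, one could streamline the argument by invoking that $\Con A$ is isomorphic to the lattice of $\mathsf U$-ideals via the $1$-class map, and then merely identify the $\mathsf U$-ideals with the filters defined above; but I would rather argue directly, carrying out four verifications (well-definedness of each map, both composites being the identity, and monotonicity) using only the standard residuated-lattice inequalities $1 \le a \imp a$, $(a\imp b)\cdot(b\imp c) \le a\imp c$, and the residuation law $a\cdot b \le c \iff a \le b\imp c$.

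First I would check that $\th_F$ is a congruence. Reflexivity holds since $a\imp a \ge 1 \in F$ and $F$ is upward closed; symmetry is built into the definition. For transitivity, from $a\imp b, b\imp c \in F$ the inequality $(a\imp b)\cdot(b\imp c) \le a\imp c$ gives $1 \le (a\imp b)\imp((b\imp c)\imp(a\imp c))$, so two applications of the modus-ponens condition (3) place $a\imp c$ in $F$. Compatibility with the connectives is then the routine core: for each operation one produces a term inequality---for instance $(a\imp b) \le (a\cdot c)\imp(b\cdot c)$ for $\cdot$, and the analogues for $\meet,\join,\imp$---and passes it through condition (3). Dually I would verify that $1/\th$ is a filter: it contains $1$; it is closed under $\meet$ since $a,b \mathrel{\th} 1$ forces $a\meet b \mathrel{\th} 1$; it is upward closed, using that $a \le b$ gives $a\imp b \ge 1$ and $a \mathrel{\th} 1$ gives $b = 1\imp b \mathrel{\th} a\imp b$; and it is closed under modus ponens, since $a \mathrel{\th} 1$ and $a\imp b \mathrel{\th} 1$ yield $b = 1\imp b \mathrel{\th} a\imp b \mathrel{\th} 1$.

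It then remains to check that the composites are identities and that both maps are monotone. The identity $1/\th_F = F$ is direct: $a \in 1/\th_F$ iff $a\imp 1, 1\imp a \in F$, and since $1\imp a = a$ this reduces to $a \in F$. The reverse identity $\th_{1/\th} = \th$ is the main obstacle. The inclusion $\th \sse \th_{1/\th}$ follows by applying compatibility to $a \mathrel{\th} b$, which gives $a\imp b \mathrel{\th} a\imp a \ge 1$, whence $a\imp b \in 1/\th$, and symmetrically $b\imp a \in 1/\th$. For the converse, suppose $a\imp b, b\imp a \in 1/\th$; by residuation $a\cdot(a\imp b) \le b$ while $a\cdot(a\imp b) \mathrel{\th} a\cdot 1 = a$, and likewise $b\cdot(b\imp a) \le a$ with $b\cdot(b\imp a) \mathrel{\th} b$. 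Joining the first relation with $b$ gives $a\join b \mathrel{\th} b$, and joining the second with $a$ gives $a\join b \mathrel{\th} a$, so $a \mathrel{\th} a\join b \mathrel{\th} b$. This join-``squeeze'' is where the residuated structure does the real work, and I expect the careful bookkeeping of this step---rather than any single idea---to be the crux. Finally, monotonicity of both maps is immediate from the definitions, so together with the two identities they are inverse lattice isomorphisms.
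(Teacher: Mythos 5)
You should note first that the paper itself gives no proof of Theorem \ref{confil}: it is quoted from \cite{Agliano1996b}, so the only question is whether your direct verification is sound. Its skeleton (the four checks, plus the join-squeeze for $\th_{1/\th}\sse\th$, which is indeed the right trick and is correct as you state it) is the standard one, and everything you write is fine when $1$ is the top element. But $\sf CRL$ here is emphatically not integral --- the paper allows unbounded algebras ($\ell$-groups are commutative residuated lattices) and its Girard algebras have $1$ strictly below $\top$ (e.g.\ $\alg G_2$ in Figure \ref{girales}, where $1$ is an atom) --- and three of your steps break exactly there. First, your upward-closure argument for $1/\th$ is a non sequitur: from $b = 1\imp b \mathrel{\th} a\imp b$ and $a\imp b\ge 1$ you cannot conclude $b\mathrel{\th}1$; worse, the claim itself is false, since $1/\D=\{1\}$ is never upward closed when $1$ is not the top. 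Second, in checking $1/\th_F=F$ you silently discarded the conjunct $a\imp 1\in F$, which is not automatic: in $\mathbb{Z}$ as an $\ell$-group (a $\sf CRL$ without bounds) with $F=\{n: n\ge 0\}$ one gets $\th_F=\D$ and $1/\th_F=\{0\}\subsetneq F$. Third, in $\th\sse\th_{1/\th}$ you pass from $a\imp b\mathrel{\th}a\imp a\ge 1$ to $a\imp b\in 1/\th$, which again needs $a\imp a\mathrel{\th}1$; with $\th=\D$ this fails, and in fact $\th_{1/\D}$ is not even reflexive in a non-integral algebra. (A smaller slip of the same kind: your compatibility inequality for meet, $(a\imp b)\le (a\meet c)\imp(b\meet c)$, is false without integrality; you need $(a\imp b)\meet 1$ on the left, which is harmless since $F$ is a lattice filter containing $1$.)

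All three failures have one root: in a non-integral $\sf CRL$ the class $1/\th$ is order-convex but not an up-set, so the map $\th\longmapsto 1/\th$ as literally written cannot land in $\op{Fil}(\alg A)$; your proof inherited this looseness of the statement and then supplied fallacious justifications for it. The standard repair makes your whole plan work: use $\th\longmapsto F_\th=\{a: a\meet 1\mathrel{\th}1\}$, the up-set of $1/\th$, instead. Then $F_\th$ is upward closed by convexity (if $a\meet 1\mathrel{\th}1$ and $a\le b$, then $b\meet 1=(a\meet 1)\join(b\meet 1)\mathrel{\th}1\join(b\meet 1)=1$) and closed under modus ponens via $(a\meet 1)((a\imp b)\meet 1)\le b\meet 1\le 1$ followed by the same join-back-up; the inclusion $\th\sse\th_{F_\th}$ holds because $(a\imp b)\meet 1\mathrel{\th}(a\imp a)\meet 1=1$; the converse inclusion is exactly your squeeze, run with $a((a\imp b)\meet 1)$ in place of $a(a\imp b)$; and $F_{\th_F}=F$ reduces, as you intended, to $1\imp a=a$, because the other conjunct $(a\meet 1)\imp 1\ge 1$ now lies in $F$ automatically. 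In the integral case (e.g.\ the integral Girard algebras the paper mentions, where $1=\top$) your proof is correct essentially as written; for the theorem in the generality this paper needs, the statement must be read with $F_\th$ in place of $1/\th$, and your verification amended accordingly.
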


Now it is evident that a congruence of a Girard algebra is a congruence of its underlying commutative residuated lattice structure, so Theorem \ref{confil} specializes easily to Girard algebras.  What about girales? They are still  ideal-determined, so the congruences are totally determined by the U-ideals. However the operation
$\escl$ is a {\em compatible operation} in the sense of \cite{Agliano1996b}; this in turn implies that the U-ideals of a girale $\alg A$ are just the filters of its underlying Girard algebra, that are closed under $\escl$.
We will name these subsets  {\em filters} as well and let the context clear the meaning. In any case Theorem \ref{confil} holds; if $\alg A$ is a girale its congruence lattice is isomorphic with the lattice of filters.
Moreover we get the following useful description:

\begin{lemma}\label{filgen} Let $\alg A$ be a girale, let $X \sse A$ and let $\op{Fil}_\alg A(X)$ the filter generated by $X$; then
$$
\op{Fil}_\alg A(X) = \{a: \escl b_1 \dots \escl b_n \le a\ \text{for some}\ \vuc bn \in X\}.
$$
\end{lemma}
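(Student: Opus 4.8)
The plan is to denote by $S$ the right-hand set $\{a: \escl b_1\dots\escl b_n \le a\ \text{for some}\ \vuc bn \in X\}$ and to prove the two inclusions separately, by showing (i) that $S$ is a filter of the girale containing $X$, and (ii) that $S$ is contained in every filter of $\alg A$ that contains $X$. Since $\op{Fil}_\alg A(X)$ is by definition the least such filter, the two facts together yield the claimed equality.

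First I would verify that $S$ is a filter containing $X$. Containment of $X$ is immediate: for $a \in X$ the single-factor witness $\escl a \le a \meet 1 \le a$ (from (G2)) places $a$ in $S$, and the empty product gives $1 \in S$. Upward closure of $S$ is trivial by transitivity of $\le$. For closure under meet and under modus ponens I would use the elementary facts, valid in any commutative residuated lattice, that $x \le 1$ implies $xy \le y$ and that $y \le a \imp a'$ is equivalent to $ya \le a'$. Concretely, if $\escl b_1\dots\escl b_n \le a$ and $\escl c_1\dots\escl c_m \le a'$, then, since every factor lies below $1$ by (G2), the combined product $\escl b_1\dots\escl b_n\escl c_1\dots\escl c_m$ lies below both $a$ and $a'$, hence below $a \meet a'$, witnessing $a \meet a' \in S$; and if instead $\escl c_1\dots\escl c_m \le a \imp a'$, then the same combined product lies below $a'$ by residuation, witnessing $a' \in S$.

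The one genuinely modal point is closure of $S$ under $\escl$. Here I would first record that products of exponentials are themselves fixed by $\escl$: iterating (G3), (G4) and Lemma \ref{girlemma}(6) gives $\escl(\escl b_1\dots\escl b_n) = \escl b_1\dots\escl b_n$. Then, given $a \in S$ with witness $\escl b_1\dots\escl b_n \le a$, monotonicity of $\escl$ (Lemma \ref{girlemma}(1)) yields $\escl b_1\dots\escl b_n = \escl(\escl b_1\dots\escl b_n) \le \escl a$, so the very same product witnesses $\escl a \in S$.

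Finally, for minimality, let $F$ be any filter of $\alg A$ with $X \sse F$ and let $a \in S$ with witness $\escl b_1\dots\escl b_n \le a$. Since $F$ is closed under $\escl$, each $\escl b_i \in F$; since $F$ is a filter it is closed under multiplication (from $y \le x \imp xy$, upward closure gives $x \imp xy \in F$, and then modus ponens gives $xy \in F$), so $\escl b_1\dots\escl b_n \in F$; and since $F$ is upward closed, $a \in F$. Thus $S \sse F$. I expect the main obstacle to be the $\escl$-closure step, which rests on the idempotency-type identity for products of exponentials; the product-closure of filters needed for minimality is the other nontrivial ingredient, both becoming routine once the right lemmas are invoked.
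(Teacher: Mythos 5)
Your proof is correct and takes essentially the same approach as the paper: both show that the right-hand set is itself a filter containing $X$ --- with the identical key modal step, namely that $\escl b_1\cdots\escl b_n = \escl(b_1\meet\dots\meet b_n)$ is fixed by $\escl$ (via (G3), (G4) and monotonicity), so the same witness serves for $\escl a$ --- and then conclude by minimality over all filters containing $X$. The paper is merely terser (it obtains lattice-filterhood as a directed union of principal lattice filters and leaves the closure under modus ponens, the product-closure of filters used in the minimality step, and the empty-$X$ convention implicit), whereas you verify these details explicitly; this is a difference of bookkeeping, not of route.
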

\begin{proof}  Let $F$ be the set described by the right hand side of the equality. Then  $F$ is a lattice filter, since it is the union of a directed family of lattice filters; moreover if
$a \in F$, then there are $\vuc bn \in X$ such that
$$
\escl (b_1 \meet \dots \meet b_n) = \escl b_1\dots \escl b_n \le a.
$$
This implies immediately that $\escl a \in F$, so $F$ is a filter which contains $X$. If $G$ is another filter containing $X$ then $F \sse G$, so $= \op{Fil}_\alg A(X)$.
\end{proof}

Lemma \ref{filgen} has obvious consequences; let $[a)$ denote the principal filter generated by $a$.

\begin{corollary}\label{cor1} Let $\alg A$ be a girale; then for any $a \in A$, $\op{Fil}_\alg A(a) = [\escl a)$. In other words  every principal filter of $\alg A$ is principal as a lattice filter.
\end{corollary}

\begin{corollary} The variety $\sf Gi$ of girales has equationally definable principal congruences.
\end{corollary}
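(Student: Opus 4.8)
The plan is to read the statement through the filter--congruence correspondence of Theorem~\ref{confil} together with the explicit computation of principal filters in Corollary~\ref{cor1}. Recall that for a girale $\alg A$ the map $F \longmapsto \th_F = \{(x,y): x\imp y,\, y\imp x \in F\}$ is a lattice isomorphism from the filters of $\alg A$ onto $\Con A$, with inverse $\th \longmapsto 1/\th$. Since this map is an order isomorphism it sends generated filters to generated congruences; in particular the principal congruence $\cg(a,b)$ collapsing $a$ and $b$ is $\th_F$, where $F$ is the smallest filter containing both $a\imp b$ and $b\imp a$. As filters are in particular lattice filters, and a lattice filter contains $u$ and $v$ iff it contains $u\meet v$, this $F$ equals $\op{Fil}_\alg A(e)$ with $e = (a\imp b)\meet(b\imp a)$. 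Applying Corollary~\ref{cor1} gives $\op{Fil}_\alg A(e) = [\escl e)$, so that
$$1/\cg(a,b) = \{x \in A : \escl e \le x\}.$$

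First I would extract the resulting membership criterion. By the very description of $\th_F$ in Theorem~\ref{confil}, a pair $(c,d)$ lies in $\cg(a,b)$ if and only if both $c\imp d$ and $d\imp c$ belong to $1/\cg(a,b)$, i.e. if and only if $\escl e \le c\imp d$ and $\escl e \le d\imp c$. These two inequalities amount to the single inequality $\escl e \le (c\imp d)\meet(d\imp c)$, which in turn is the equation
$$\escl\big((a\imp b)\meet(b\imp a)\big) \meet (c\imp d)\meet(d\imp c) = \escl\big((a\imp b)\meet(b\imp a)\big).$$
Thus $(c,d)\in\cg(a,b)$ holds in every girale precisely when this single equation in the four variables $a,b,c,d$ is satisfied, which is exactly equational definability of principal congruences (EDPC).

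The only step that uses the specific structure of girales, rather than generic facts about commutative residuated lattices, is the identity $\op{Fil}_\alg A(e) = [\escl e)$: it is the idempotency of $\escl$ and its compatibility with the order, packaged in Lemma~\ref{filgen} and Corollary~\ref{cor1}, that forces the generated filter to be principal \emph{as a lattice filter}, and hence to be cut out by a single inequality. I do not expect any genuine obstacle beyond invoking this fact correctly; everything else is a routine transcription through the congruence--filter isomorphism, and the number of variables (four) and the single defining equation confirm that the scheme is of the required form.
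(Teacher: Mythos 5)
Your proof is correct and takes essentially the same route as the paper's: both pass through the congruence--filter isomorphism of Theorem~\ref{confil} and the identity $\op{Fil}_{\alg A}\bigl((a\imp b)\meet(b\imp a)\bigr) = [\escl((a\imp b)\meet(b\imp a)))$ from Corollary~\ref{cor1}, arriving at the criterion $(c,d)\in\op{Cg}_{\alg A}(a,b)$ iff $\escl\bigl((a\imp b)\meet(b\imp a)\bigr) \le (c\imp d)\meet(d\imp c)$, rewritten as a single equation in four variables. Your explicit check that $\op{Cg}_{\alg A}(a,b)$ corresponds to the smallest filter containing $a\imp b$ and $b\imp a$ merely fills in the step the paper calls ``easily seen''; the substance is identical.
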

\begin{proof} For any girale $\alg A$ and for $a,b \in A$ let $a \leftrightarrow b:= (a \imp b) \meet (b \imp a)$ and by $[a)$ the principal lattice filter generated by $a$. Then by $\op{Fil}_\alg A(a \leftrightarrow b) = [\escl(a \leftrightarrow b)$; hence $ c \leftrightarrow d \in \op{Fil}_\alg A(a \leftrightarrow b)$ if and only if $\escl(a \leftrightarrow b) \le c \leftrightarrow b$. But using the isomorphism between the congurence lattice and the filter lattice it is easily seen that $\op{Cg}_\alg A(a,b) = \op{Cg}_\alg A(1,\escl(a \leftrightarrow b))$. So we get that
$$
(c,d) \in \op{Cg}_\alg A(a,b)\qquad\text{if and only if}\qquad \escl(a \leftrightarrow b) \le c \leftrightarrow d
$$
i.e.  $\vv{Gi}$ has equationally definable principal congruences.
\end{proof}

Through the general theory of algebraizable logics we get that $\sf LL$ has the deduction theorem: $\Gamma, p \vdash_{\sf LL} q$ if and only if $\Gamma \vdash_{\sf LL} \escl p \imp q$. But this is of course
well known \cite{Avron1988}.

From Lemma \ref{relheyting} we know that $\escl A$ is a relatively complete Heyting subset of $\alg A$; with some adjustment it can be made into a Heyting algebra
$\escl \alg A$ which is deeply connected with $\alg A$.

\begin{theorem} For any girale $\alg A$, $\escl \alg A = \la \escl A, \join, \meet_{!},\imp_{!},\bot,1\ra$ is a Heyting algebra, where for $u,v \in \escl A$
$$
u \meet_{!} v = u \meet v\qquad  u \imp_{!}v = \escl (u \imp v).
$$
Moreover $\Con A \cong \op{Con}(\escl \alg A)$.
\end{theorem}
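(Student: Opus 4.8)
The plan is to treat $\escl$ as an interior operator on $\alg A$: by (G2) and Lemma~\ref{girlemma}(1) it is contracting, order preserving, and by (G4) idempotent, so $\escl A$ is exactly its set of fixed points. First I would record the structural facts about $\escl A$. Being the fixed points of an interior operator, $\escl A$ is closed under the \emph{ambient} join: if $u,v\in\escl A$ then $u,v\le u\join v$ forces $u,v\le\escl(u\join v)$, whence $u\join v\le\escl(u\join v)\le u\join v$ and $u\join v\in\escl A$. The meet is the delicate point. Because (G3) replaces the meet by the product, $\escl(u\meet v)=\escl u\,\escl v=uv$, and this lies in $\escl A$ (which is closed under products), but the \emph{ambient} meet $u\meet v$ need \emph{not} be a fixed point. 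Hence the lattice meet of $\escl A$ must be taken relatively, as $u\meet_! v:=\escl(u\meet v)=uv$, i.e.\ the largest fixed point below $u\meet v$; this is immediate from the interior-operator description. With $1$ as top and $\bot$ as bottom (a fixed point, being the least element and hence $\escl\bot=\bot$), $\la\escl A,\join,\meet_!,\bot,1\ra$ is a bounded lattice.

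Next I would verify the residuation law that upgrades it to a Heyting algebra, namely $u\meet_! w\le v\iff w\le u\imp_! v$ for $u,v,w\in\escl A$, where $u\imp_! v=\escl(u\imp v)$. Here I would exploit the residuation of $\alg A$ together with the openness of $w$: the chain $u\meet_! w\le v\iff uw\le v\iff w\le u\imp v\iff w\le\escl(u\imp v)$ uses, in order, (G3), residuation in $\alg A$, and finally the fact that $w=\escl w$ is a fixed point so that $w\le u\imp v$ and $w\le\escl(u\imp v)$ are equivalent. This simultaneously shows $u\imp_! v\in\escl A$ and that it is the relative pseudocomplement, so $\escl\alg A$ is a Heyting algebra (distributivity then comes for free from the adjunction).

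For the congruence isomorphism I would route everything through filters. By Theorem~\ref{confil} and the discussion following it, $\Con A\cong\op{Fil}(\alg A)$, where the filters of the girale are the $\escl$-closed filters of its Girard reduct; and for the Heyting algebra $\op{Con}(\escl\alg A)$ is isomorphic to its lattice of filters. The bridge is the observation that, for an $\escl$-closed filter $F$ and any $a$, $a\in F\iff\escl a\in F$ (left to right by $\escl$-closure, right to left by $\escl a\le a$ and upward closure). Thus $F$ is determined by $G:=F\cap\escl A$ via $F=\{a:\escl a\in G\}$, and I would check that $F\mapsto G$ is an inclusion-preserving bijection onto the filters of $\escl\alg A$. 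In one direction, $G$ is closed under $\meet_!$ because filters are closed under products and $u\meet_! v=uv$; in the other, given a filter $G$ of $\escl\alg A$, the set $\{a:\escl a\in G\}$ is a lattice filter by (G3) (since $\escl(a\meet b)=\escl a\,\escl b\in G$), is $\escl$-closed by (G4), and is closed under modus ponens because Lemma~\ref{girlemma}(7) gives $\escl(a\imp b)\le\escl a\imp_!\escl b$, so $\escl a,\escl(a\imp b)\in G$ forces $\escl b\in G$ via the Heyting modus ponens in $G$. This bijection is an order isomorphism of the two filter lattices, and composing the three isomorphisms yields $\Con A\cong\op{Con}(\escl\alg A)$.

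The step I expect to be genuinely delicate is the meet: one must notice that $\escl A$ is in general \emph{not} a $\meet$-subsemilattice of $\alg A$, so the Heyting meet is the relativised $\escl(u\meet v)=uv$ rather than the ambient $u\meet v$ (a finite girale built from a relatively complete Heyting subset already exhibits open $u,v$ with $u\meet v\notin\escl A$). With the meet read this way the adjunction step above is exactly what makes $\escl(u\imp v)$ the correct implication; everything else is a routine transcription of residuation in $\alg A$ and of the filter/congruence dictionary established earlier.
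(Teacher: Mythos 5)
You are right to be suspicious of the meet, and this is the most valuable part of your write-up: the theorem as printed, $u \meet_{!} v = u \meet v$, is a slip in the \emph{statement}. The set $\escl A$ is in general not closed under the ambient meet, and the Heyting meet must be the relativised one, $u \meet_{!} v = \escl(u \meet v) = \escl u \cdot \escl v$, exactly as you argue. This is also what the paper's own proof delivers: it observes that $\escl$ is a conucleus and invokes Lemma 3.1 of Montagna--Tsinakis, and the conucleus image carries the interiorised meet $\sigma(x \meet y)$, not the ambient one; so your correction agrees with the paper's proof even where it disagrees with its display. Since you asserted but did not construct a witness, here is one: take the bounded lattice $0 < m < u,v < w < 1$ with $u \parallel v$, $u \meet v = m$, $u \join v = w$, and the commutative monoid structure with unit $1$ given by $u^2=u$, $v^2=v$, $w^2=w$, $wu=u$, $wv=v$, and every other product of non-unit elements equal to $0$ (in particular $uv=0$). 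One checks this is a residuated lattice in which $0$ is involutive (indeed $\nneg u = v$, $\nneg w = m$, $\nneg m = w$), hence a Girard algebra, and $H=\{1,w,u,v,0\}$ is a relatively complete Heyting subset; by Lemma~\ref{relheyting} the pair $\la \alg A, \escl_H\ra$ is a girale in which $u,v$ are open but their ambient meet $m$ is not, while $u \meet_{!} v = \escl m = 0 = uv$.

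As for the route: for the Heyting half you replace the paper's citation (conucleus, Lemma 3.1 of Montagna--Tsinakis, then integrality from $\escl 1 = 1$ and idempotence from Lemma~\ref{girlemma}(3)) by a direct verification of the adjunction $u \meet_{!} w \le v \iff uw \le v \iff w \le u \imp v \iff w \le \escl(u \imp v)$; this is a self-contained and correct substitute for the citation, at the cost of a few lines. For the congruence half your argument is essentially the paper's: your inverse map $G \longmapsto \{a : \escl a \in G\}$ coincides with the paper's $G \longmapsto \op{Fil}_{\alg A}(G)$ by Lemma~\ref{filgen} (for $G$ closed under $\meet_{!}$, i.e.\ under products, the generated filter is exactly $\{a : \escl b \le a$ for some $\escl b \in G\} = \{a : \escl a \in G\}$), and your key equivalence $a \in F \iff \escl a \in F$ for $\escl$-closed filters is the same mechanism the paper runs through Lemmas~\ref{girlemma} and~\ref{filgen}. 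The only step needing one more line is your modus ponens argument: Lemma~\ref{girlemma}(7) gives $\escl(a \imp b) \le \escl a \imp \escl b$ with the \emph{ambient} implication, and you must apply $\escl$, monotonicity and (G4) to get $\escl(a \imp b) = \escl\escl(a \imp b) \le \escl(\escl a \imp \escl b) = \escl a \imp_{!} \escl b$ before invoking the Heyting modus ponens in $G$; this is trivial but should be written out.
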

\begin{proof} We observe first that $\escl$ is a {\em conucleus} in the sense of \cite{MontagnaTsinakis2010}; then we apply Lemma 3.1 in \cite{MontagnaTsinakis2010} to conclude that
 $\escl \alg A$ is a commutative residuated lattice. Since $\escl 1  = 1$, it is also integral and by Lemma \ref{girlemma}(3) every element
of $\escl \alg A$ is idempotent. This is enough to deduce that $\escl \alg A$ is a Heyting algebra.

Next  we  show that the  the mappings
$$
H \longmapsto H \cap \escl A \qquad\qquad
G \longmapsto \op{Fil}_{\alg A} (G)
$$
induce a lattice isomorphism between the filter lattice of $\alg A$ and the filter lattice of $\escl \alg A$.  Since they both clearly preserve the ordering we need only check that they are well defined and their
composition is the identity on the respective domains.

That for any filter $H$ of $\alg A$,  $H \cap \escl A$ is a filter of $\escl \alg A$, it is a consequence of
Lemma \ref{girlemma}(4). Now let $G$ be a filter of $\escl \alg A$ and let
$H = \op{Fil}_{\alg A}(G)$. We will show that $ H \cap \escl A = G$.
Clearly $G \sse H \cap \escl A$.
If $\escl a \in H$, then  by Lemma \ref{filgen} there are $\vuc {\escl b}n \in G$
such that
$$
\escl b_1\dots \escl b_n \le \escl a.
$$
But since
$$
\escl b_1\dots \escl b_n= \escl(b_1 \meet\dots\meet b_n) = \escl b_1 \meet_! \dots \meet_! \escl b_n
$$
by the usual description of filters in a Heyting algebras
we get that $\escl_F a \in G$.

On the other hand if $H$ is a filter of $\alg A$, it is obvious that $\op{Fil}_{\alg A}(H \cap \escl A) =H$ is obvious,
since $\escl a \le a$.
\end{proof}

Let us define an operator $\op{Heyt}$ on a class $\vv K$ of girales by
$$
\op{Heyt}(\vv K)= \{ \escl \alg A: \alg A \in \vv K\}.
$$

\begin{theorem} For any variety $\vv V$ of girales, $\op{Heyt}(\vv V)$ is a variety of (pointed) Heyting algebras.
\end{theorem}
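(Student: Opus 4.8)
The plan is to verify that $\op{Heyt}(\vv V)$ is closed under $\HH$, $\SU$ and $\PP$, so that it is a variety by Birkhoff's theorem. Two of the three closures are essentially formal. For $\PP$, note that in a product girale the modality $\escl$ and the Heyting operations $\meet$, $\join$, $\imp_{!}$ are all computed coordinatewise, so $\escl(\prod_i \alg A_i) = \prod_i \escl \alg A_i$; since $\vv V$ is closed under $\PP$, the right-hand side lies in $\op{Heyt}(\vv V)$. For $\HH$ I would exploit the isomorphism $\Con A \cong \op{Con}(\escl \alg A)$ established above, in its explicit form via $H \mapsto H \cap \escl A$ and $G \mapsto \op{Fil}_{\alg A}(G)$. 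Given a surjective Heyting morphism $\escl \alg A \tla \alg K$ with kernel $\bar\theta$, I would lift $\bar\theta$ along this correspondence to a girale congruence $\theta$ on $\alg A$; then $\alg A/\theta \in \vv V$, and the correspondence yields a Heyting isomorphism $\escl(\alg A/\theta) \cong (\escl \alg A)/\bar\theta \cong \alg K$ (the map $\escl a \mapsto [\escl a]_\theta$ is onto with kernel $\bar\theta$), so $\alg K \in \op{Heyt}(\vv V)$.

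The hard part is closure under $\SU$: given a Heyting subalgebra $\alg H \le \escl \alg A$ with $\alg A \in \vv V$, one must produce a girale $\alg B \in \vv V$ with $\escl \alg B \cong \alg H$. The naive attempt—take the subgirale $\la H\ra \le \alg A$—fails, and it is worth isolating why. One checks that $\escl\la H\ra = \la H\ra \cap \escl A$ is the Heyting subalgebra of $\escl \alg A$ generated by $H$, and this can be strictly larger than $\alg H$. The reason is structural: $\escl$ behaves like an interior operator, so by (G3) it preserves $\meet$ but in general \emph{not} $\join$, and an element such as $\escl\bigl((h_1 \imp h_2) \join (h_3 \imp h_4)\bigr)$ may properly exceed $(h_1 \imp_{!} h_2) \join (h_3 \imp_{!} h_4)$ while still being forced into $\escl A$. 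Once this overshoot occurs no subgirale of $\alg A$ can have modality image exactly $\alg H$, so $\alg B$ must be sought outside $\alg A$.

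To hit $\alg H$ on the nose I would use the converse half of Lemma~\ref{relheyting}: if $\alg H$ occurs as a relatively complete Heyting subset of some Girard algebra $\alg C$, then $(\alg C,\escl_{\alg H})$ is a girale whose modality image is exactly $\alg H$. Making $\alg H$ relatively complete is the lesser difficulty—when $\alg H$ is finite it is automatic, since the relevant suprema are finite joins of elements of $\alg H$ and hence again lie in $\alg H$, and in general one passes to a suitable completion of $\alg A$ using the embedding machinery of Section~\ref{firstembedding}. The genuine obstacle is that replacing the original modality by $\escl_{\alg H}$ need not respect the defining equations of $\vv V$, so $(\alg C,\escl_{\alg H})$ may fall outside $\vv V$. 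Overcoming this is the crux: one must realize $\alg H$ using only the class operators $\SU$, $\PP$, $\HH$ applied to $\alg A$, thereby reconciling the demand that the modality image be exactly $\alg H$ (which rules out the overshooting $\la H\ra$) with membership in $\vv V$ (which rules out redefining the modality at will). Here I expect a J\'onsson-style reduction—legitimate since Heyting algebras are congruence distributive—to help: it lets one assume $\alg H$ subdirectly irreducible, and together with the closures under $\HH$ and $\PP$ already in hand this should reduce the subalgebra problem to the tractable case in which the realizing girale can be carved out of a power of $\alg A$.
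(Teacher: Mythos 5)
Your handling of $\HH$ and $\PP$ matches the paper's proof (the paper lifts a filter $G$ of $\escl\alg A$ to the filter $F=\{b: \escl a\le b\ \text{for some}\ \escl a \in G\}$ of $\alg A$ and gets $\escl(\alg A/F)\cong \escl\alg A/G$, exactly your congruence-correspondence argument). The genuine gap is in $\SU$, and it starts with your dismissal of the naive approach, which is not merely unproved but false: the paper's entire proof of $\SU$-closure consists in showing that for a subalgebra $\alg C\le \escl\alg A$, the girale $\alg B=\op{Sub}_{\alg A}(C)$ satisfies $\escl\alg B=\alg C$ on the nose. Your ``overshoot'' reasoning is a non sequitur. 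It is true that $\escl$ does not distribute over $\join$, so $\escl\bigl((h_1\imp h_2)\join(h_3\imp h_4)\bigr)$ can strictly exceed $(h_1\imp_{!}h_2)\join(h_3\imp_{!}h_4)$; but from this you conclude that the element escapes $\alg H$, whereas it is $\escl$ of a term over $H$, and the content of the paper's induction is precisely that all such elements land back in $H$ because $H$ is closed under the Heyting operations. Concretely, in the integral Boolean case (Boolean algebra with $\cdot=\meet$ and $\escl$ a topological interior, which is a girale), your candidate element equals $(h_1\meet_{!}h_3)\imp_{!}(h_2\join h_4)\in H$: the overshoot is real but absorbed. Note also that your sentence undermines itself: since $\alg H$ is by hypothesis a Heyting subalgebra of $\escl\alg A$, the Heyting subalgebra generated by $H$ is $H$. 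What the paper's induction on term complexity actually has to work for is the negation case, settled by the computation $\escl\nneg\escl c=\escl c\imp_{!}\escl\nneg 1\in C$, obtained from $\nneg\escl c\cdot\escl c\le\nneg 1$, the fact that products of $\escl$-elements are again $\escl$-elements (hence $\le$ between them is preserved by $\escl$), and residuation.

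Your fallback program does not close the gap and cannot, as described. Redefining the modality as $\escl_H$ on a completion does give a girale with $\escl$-image exactly $H$, but, as you concede, with no control on membership in $\vv V$ --- and that control is the entire content of the theorem, not a residual technicality. The J\'onsson-style reduction is moreover circular: to assume $\alg H$ subdirectly irreducible you embed $\alg H$ into a product $\prod_i\alg H_i$ of its s.i.\ quotients; the quotients are quotients of $\alg H$, whose membership in $\op{Heyt}(\vv V)$ is exactly what is in question (your $\HH$-closure applies to images of algebras already known to lie in $\op{Heyt}(\vv V)$), and even granting $\prod_i\alg H_i\in\op{Heyt}(\vv V)$ by $\PP$-closure, recovering $\alg H$ from the product is again an instance of the unproved $\SU$-closure. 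J\'onsson's Lemma, which concerns s.i.\ members of the generated variety $\VV(\op{Heyt}(\vv V))$, supplies no mechanism for carving a realizing girale out of a power of $\alg A$. So your proposal establishes closure under $\HH$ and $\PP$ but leaves $\SU$ open, having rejected --- on the strength of a mistaken obstruction --- the generated-subalgebra argument that is the paper's actual proof.
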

\begin{proof} We need only to show that $\op{Heyt}(\vv V)$ is closed under $\HH,\SU$ and $\PP$. Let $\escl \alg A \in \op{Heyt}(\vv V)$ and
let $G \in \op{Fil}(\escl \alg A)$. Let $F= \{ b \in A: \escl a \le b \ \text{for some}\ \escl a \in G\}$; then $F$ is  a filter of $\alg A$
and $F \cap\escl A = G$.  So $\escl (\alg A/F) \cong \escl \alg A/G$ and $\escl \alg A/G \in \op{Heyt}(\vv V)$. That $\op{Heyt}(\vv V)$ is closed under direct products
is obvious so let $\escl \alg A \in \op{Heyt}(\vv V)$ and let $\alg C$ be a subalgebra of $\escl \alg A$ and let $\alg B = \op{Sub}_\alg A(C)$. Clearly $\alg C \sse \escl \alg B$;
conversely, as $\alg C$ generates $\alg B$ in $\alg A$, every element of $\escl \alg B$ is $\escl t(\vuc cn)$ for some $n$-ary term of $\alg A$ and $\vuc cn\in C$.
Now an induction on the complexity of $t(\vuc xn)$ shows that if $\vuc cn \in C$ then $\escl t(\vuc cn) \in \escl C$ .  The only nontrivial part is to show that
if $c \in C$, then $\escl \nneg \escl c \in C$.

  Observe that  from
$\nneg \escl c \escl c \le \nneg 1$ and $\escl \nneg \escl c \le \nneg
\escl c$ we get $\escl \nneg \escl c  \escl c \le \nneg 1$ and
hence $\escl \nneg \escl c \escl c \le \escl \nneg 1$.

By residuation $\escl \nneg \escl c \le \escl c \imp \escl \nneg 1$
and hence $\escl\nneg \escl c \le \escl(\escl c \imp \escl \nneg 1)=
\escl c \imp_{!} \escl \nneg 1$. On the other hand
$$
\nneg \escl c = \escl c \imp \nneg 1 \ge \escl c \imp \escl \nneg 1,
$$
implying
$$
\escl\nneg \escl c \ge \escl(\escl c \imp \escl \nneg 1)= \escl c \imp_{!}\escl \nneg 1.
$$
In conclusion $\escl\nneg\escl c =\escl c \imp_{!} \escl\nneg 1 \in C$.
\end{proof}

The mapping $\vv V \longmapsto \op{Heyt}(\vv V)$
is a join homomorphism from the lattice of subvarieties of  girales to the lattice of subvarieties of Heyting
algebras:
\begin{align*}
\op{Heyt}(\vv V \join \vv V') &= \op{Heyt}(\VV(\vv V \cup \vv V'))
= \VV(\op{Heyt}(\vv V \cup \vv V'))\\
&=\VV(\op{Heyt}(\vv V) \cup \op{Heyt}(\vv V')) = \op{Heyt}(\vv V) \join \op{Heyt}(\vv
V').
\end{align*}
However it is not a meet homomorphism, as we shall see later.

Thus the lattice of varieties of girales can be
partitioned into equivalence classes that are also join semilattices;
information on these classes can be recovered from the varieties of Heyting
algebras that are their ``natural'' representatives. Such pieces of
information can be
glued together to get a clearer picture of the whole lattice of varieties
of girales.

Let's call a girale $\alg A$ a {\bf Boolean girale} if $\op{Heyt}(\alg A)$ is a Boolean algebra; a variety $\vv V$ of girales
is {\bf Boolean} if every algebra in $\vv V$ is Boolean.
We will give a recipe to construct a family
of simple Boolean
girales of unbounded cardinality.
By J\'onnson Lemma \cite{Jonsson1967}, any two of them
generate  distinct  Boolean varieties of girales.

Let $\alg G_n$ be the height three lattice
with $n$ atoms. Let $\bot$ and $\top$ be the bottom and the top of the
lattice and let $1$, $0$ be two distinct atoms. Define
$\nneg$ on $G_n$ by setting $\nneg \bot = \top$, $\nneg\top =  \bot$,
$\nneg 1 = \bot$, $\nneg 0 = 1$ and $\nneg a =a$ for any other
$a \in G_n$. Define  $\cdot$ on $G_n$
by
\begin{align*}
&\bot \cdot a = a \cdot \bot =\bot  \\
&1 \cdot a = a \cdot 1 = a \\
&a \cdot a = \bot \qquad  a \notin\{0,1,\bot,\top\}\\
& a \cdot b = \top \qquad\text{otherwise}
\end{align*}
Finally define $\escl 1 = \escl \top = 1 $ and $\escl a = \bot$
otherwise. Some calculations show that each $\alg G_n$ is a Boolean girale;
the subdirectly irreducible algebras in $\VV(\alg G_n)$ are exactly
the $\alg G_m$, $1\le m\le n$.

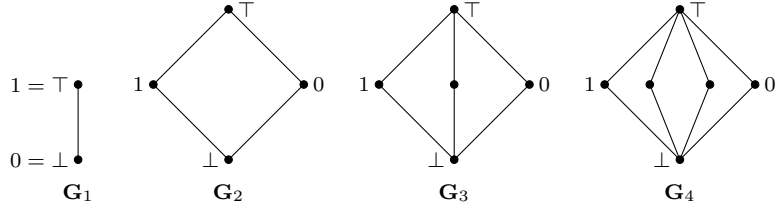
\begin{figure}[htbp]
\begin{center}
\begin{tikzpicture}[scale=1]
\draw (0,0) -- (0,1);
\draw (2,0) -- (1,1) -- (2,2) -- (3,1) -- (2,0) ;
\draw (5,0) -- (4,1) -- (5,2) -- (5,0) -- (6,1) -- (5,2);
\draw  (8,0) -- (7,1) -- (8,2) -- (7.6,1) -- (8,0) -- (8.4,1) -- (8,2) -- (9,1) -- (8,0) ;
\draw[fill] (0,0) circle [radius=0.05];
\draw[fill] (0,1) circle [radius=0.05];
\node[left] at (0,0) {\footnotesize$0=\bot$};
\node[left] at (0,1) {\footnotesize $1=\top$};
\draw[fill] (2,0) circle [radius=0.05];
\draw[fill] (1,1) circle [radius=0.05];
\draw[fill] (2,2) circle [radius=0.05];
\draw[fill] (3,1) circle [radius=0.05];
\node[left] at (2,0) {\footnotesize$\bot$};
\node[left] at (1,1) {\footnotesize $1$};
\node[right] at (2,2) {\footnotesize $\top$};
\node[right] at (3,1) {\footnotesize $0$};
\draw[fill] (5,0) circle [radius=0.05];
\draw[fill] (4,1) circle [radius=0.05];
\draw[fill] (5,2) circle [radius=0.05];
\draw[fill] (6,1) circle [radius=0.05];
\draw[fill] (5,1) circle [radius=0.05];
\node[left] at (5,0) {\footnotesize$\bot$};
\node[left] at (4,1) {\footnotesize $1$};
\node[right] at (5,2) {\footnotesize $\top$};
\node[right] at (6,1) {\footnotesize $0$};
\draw[fill] (8,0) circle [radius=0.05];
\draw[fill] (7,1) circle [radius=0.05];
\draw[fill] (8,2) circle [radius=0.05];
\draw[fill] (7.6,1) circle [radius=0.05];
\draw[fill] (8.4,1) circle [radius=0.05];
\draw[fill] (9,1) circle [radius=0.05];
\node[left] at (8,0) {\footnotesize$\bot$};
\node[left] at (7,1) {\footnotesize $1$};
\node[right] at (8,2) {\footnotesize $\top$};
\node[right] at (9,1) {\footnotesize $0$};
\node[below] at (0,-.2) {\footnotesize  $\alg G_1$};
\node[below] at (2,-.2) {\footnotesize  $\alg G_2$};
\node[below] at (5,-.2) {\footnotesize  $\alg G_3$};
\node[below] at (8,-.2) {\footnotesize  $\alg G_4$};
\end{tikzpicture}
\caption{Boolean girales}\label{girales}
\end{center}
\end{figure}

The algebra $\alg G_1$ in Figure \ref{girales} is the two element Boolean
algebra and hence $\op{Heyt}(\VV(\alg G_1)) = \vv B$, the variety of Boolean algebras. On the other hand,
since both $\alg G_1$ and $\alg G_2$  are finite simple algebras with no proper
subalgebras  both $\VV(\alg B)$ and $\VV(\alg G_2)$ are atoms in the lattice of
varieties of girales and  their intersection is the trivial variety.
However $\op{Heyt}(\VV(\alg G_2)) = \vv B$ as well,  so
$\op{Heyt}(\VV(\alg G_1)) \cap \op{Heyt}(\VV(\alg G_2)) = \vv B$.
This shows that the map $\vv V \longmapsto \op{Heyt}(\vv V)$ is not a meet homomorphism.

\section{Fragments and expansions of $\mathsf{LL}$}

\subsection{Fragments}
Any fragment of $\mathsf{LL}$ whose language contains the connectives
$\imp,\meet$ is algebraizable. This is obvious if $1$ is contained
in the language as well, since in this case, by Corollary 2.12 of
\cite{BlokPigozzi1989}, the defining equation and congruence formulas are the
same as for $\mathsf {LL}$. It is easily checked that these fragments are in
fact strongly algebraizable.

If $1$ is not contained in the language, then we get something very
similar to what happens in  Relevance Logic.
We will describe one case in detail and leave all the others to the
interested reader. Let $\mathsf {LR}$ be the system axiomatized by axioms
(HL1)--(HL7), (HL12)--(HL19), (MP) and (Adj). The system is algebraizable
with defining equation $p \meet (p \imp p)= p \imp p$ and equivalence
formulas $\D= \{p \imp q,q\imp p\}$. The Lindenbaum algebra of
$\mathsf {LR}$ is easily seen to be a bounded lattice with involution and
with an implication satisfying (2.2)--(2.5). Unfortunately the variety $\vv
V$
of such algebras is not the equivalent algebraic semantics of $\mathsf {LR}$,
since the natural
ordering of the underlying lattice does not model adequately the residuation
in $\mathsf {LR}$. It turns out that the variety  of
$\mathsf {LR}$-algebras, i.e.  the subvariety of $\vv V$ axiomatized by the equation
$$
((x \imp x)\meet (y\imp y))\imp z \le z,
$$
is the  equivalent algebraic semantics for $\mathsf{LR}$.

In this context the variety $\vv R$ of relevant algebras is a subvariety
of the variety $\vv H$ of $\mathsf {LR}$-algebras. Since $\vv R$ does not have
equationally definable principal congruences, the variety of $\mathsf{LR}$-algebras  does not have it as
well.
However, if we add to $\mathsf {LR}$ the {\em mingle axiom}
$$
p\imp (p\imp p),
$$
then the resulting equivalent algebraic semantics has equationally definable
principal congruences and thus the system has the deduction theorem. This
displays once
more the strong connections between Linear Logic and Relevance Logic.

The  implicational fragment of $\mathsf CLL$ gives rise to a well known
deductive system, BCI-logic, which is not algebraizable
\cite{BlokPigozzi1989}. In \cite{BlokPigozzi1992}, Blok and Pigozzi noted
also that the $\{\imp,\cdot\}$-fragment of $\mathsf {LL}$ is not algebraizable.
We describe briefly its logical matrices: let $\la A,\imp,\cdot,\le\ra$
be a commutative
partially ordered residuated semigroup. A {\em filter} of $\alg A$ is
an order filter of $A$ closed under multiplication. One sees easily
that a reduced matrix
for the $\{\imp,\cdot\}$-fragment of $\mathsf {LL}$ is $\la A,\nabla_A\ra$
where $\alg A$ is a commutative p.o. residuated semigroup and
$\nabla_A$ is the filter generated by the set $\{a\imp a: a \in A\}$.
Hence the reduced matrix semantics is the class of {\em reduced filtered
commutative residuated partially ordered semigroups}. The reason why this
class cannot be replaced by a class of proper algebras lies in
the fact that the partial order cannot be recovered from the operations.
By the same argument one sees that the multiplicative fragment of $\mathsf {LL}$
is not algebraizable.

Finally any algebraizable fragment whose language contains $\escl$ has the
deduction theorem and  gives rise to an equivalent algebraic semantics
having equationally definable principal congruences.

\subsection{Intuitionistic Linear Logic}
There are at least two  versions of intuitionistic linear logic.
The first one is $\mathsf {ILL}$ (see \cite{Girard1987}) which is obtained from
classical linear logic in the
same way Gentzen's system $\mathsf {LJ}$ (intuitionistic logic) is obtained from
$\mathsf {LK}$: a left side of a sequent can contain at most one formula.
So we are forced to drop all connectives and constants whose rules do not
obey to this restriction, i.e. $\?, \neg, \bot, \Par$. $\mathsf {ILL}$ can be seen
as a deductive system axiomatized Hilbert-style by (HL1)--(HL3),
(HL6)--(HL9),
(HL12)--(HL24) with (MP), (Adj) and (Nec).

It is sometimes convenient, to get a more suitable comparison with classical
linear logic, to
consider $\mathsf {ILN}$ \cite{Abrusci1990}, i.e. {\em intuitionistic Linear
Logic with negation}. This is done by adding two ``ad hoc'' rules for $\nneg$
in the sequent calculus. On the Hilbert-style side we just add (HL5),
(HL10), (HL11) (one of course has to define $\bot$ as $\nneg \one$). Finally,
if we drop (HL20)--(HL24) and (Nec) from $\mathsf {ILL}$ or $\mathsf {ILN}$ we get the
exponential-free fragments.

It is clear that all these systems are fragments of $\mathsf {LL}$ with respect
to
suitable languages. Since the connectives appearing
in the congruence formulas and defining equation for $\mathsf {LL}$ belong to
all these languages, we can conclude at once (by Corollary 2.12 in
\cite{BlokPigozzi1989}) that all the systems are algebraizable (and they in fact  strongly
algebraizable).

\subsection{Noncommutative Linear Logics}
If in the sequent formulation of Linear Logic we delete the
{\em exchange rule}, we get {\em noncommutative Linear Logic}.
It is not hard (only boring) to work out a Hilbert-style axiomatization
of non commutative Linear Logic. In this framework one has to deal with two
implications and two negations and the product $\cdot$ is no
longer commutative. Of course also in this case we have interesting fragments
and in particular one can get {\em noncommutative intuitionistic linear
logic} \cite{Abrusci1991}. Noncommutative Linear Logics are algebraizable
(the proof is similar to the one for Linear Logic, only more work
has to be done to take care of the doubling of implication and
negation)
and the equivalent algebraic semantics are varieties of $\mathsf{FL}$-algebras
with further operations.

\providecommand{\bysame}{\leavevmode\hbox to3em{\hrulefill}\thinspace}
\providecommand{\MR}{\relax\ifhmode\unskip\space\fi MR }
\providecommand{\MRhref}[2]{%
  \href{http://www.ams.org/mathscinet-getitem?mr=#1}{#2}
}
\providecommand{\href}[2]{#2}

\end{document}